\newcounter{theorem}
\newtheorem{thm}[theorem]{Theorem}
\newcounter{thm}
\newtheorem{conj}[thm]{Conjecture}
\newtheorem{lemma}[thm]{Lemma}
\newtheorem{fakeprop}{Proposition}
\newtheorem{fakethm}{Theorem}
\newtheorem{definition}[thm]{Definition}
\newcounter{pro}
\newtheorem{prop}[pro]{Proposition}
\newtheorem*{Remark}{Remark}
\DeclareMathOperator{\tr}{tr}
\DeclareMathOperator{\Sym}{Sym}
\DeclareMathOperator{\Symp}{Symp}
\DeclareMathOperator{\Hilb}{Hilb}
\DeclareMathOperator{\Ham}{Ham}
\DeclareMathOperator{\Vect}{Span}
\DeclareMathOperator{\id}{id}
\DeclareMathOperator{\diag}{diag}
\DeclareMathOperator{\Hom}{Hom}
\DeclareMathOperator{\End}{End}
\begin{document}

\author{V.V. Fock, A. Thomas}
\title{Higher complex structures}
\address{Universit\'e de Strasbourg, CNRS, IRMA UMR 7501, 67084 Strasbourg, France}
\email{fock@math.unistra.fr} 
\email{thomas@math.unistra.fr}



\begin{abstract}
We introduce and analyze a new geometric structure on topological surfaces generalizing the complex structure. To define this so called higher complex structure we use the punctual Hilbert scheme of the plane. The moduli space of higher complex structures is defined and is shown to be a generalization of the classical Teichm\"uller space. We give arguments for the conjectural isomorphism between the moduli space of higher complex structures and Hitchin's component. 
\end{abstract}

\maketitle

\section*{Introduction}

Poincar\'e's uniformization theorem links complex structures on a surface $\Sigma$ to homomorphisms from the fundamental group of the surface to $PSL(2,\mathbb{R})$, the automorphism group of the hyperbolic plane. With this, Teichm\"uller space $\mathcal{T}_\Sigma$ can be identified with the connected component of the character variety of faithful and discrete representations of the fundamental group in $PSL(2,\mathbb{R})$: $$\mathcal{T}_\Sigma \cong \Hom^{\text{discrete}}(\pi_1(\Sigma), PSL(2,\mathbb{R}))/PSL(2,\mathbb{R}).$$ In his celebrated paper \cite{Hit.1}, Nigel Hitchin proves the existence of a connected component of the character variety for an adjoint group of a split real form of any complex simple Lie group (for instance $PSL(n,\mathbb{R})$) consisting of faithful and discrete representations and parametrized by holomorphic differentials. These components generalize Teichm\"uller space.
His methods are analytic, using the theory of Higgs bundles. Teichm\"uller space also has geometric descriptions: it is the moduli space of hyperbolic structures (metrics with constant curvature -1) and also the moduli space of complex structures. The natural question is then if there is a geometric description of Hitchin's component.


In this paper, we describe and analyze a new geometric structure on surfaces generalizing the complex structure. Conjecturally, the moduli space of the so called higher complex structure is isomorphic to Hitchin's component. This would give a purely geometric approach to higher Teichm\"uller theory. We give arguments in favor of the isomorphism.

The search for a geometric origin of Hitchin's component is of course not new. Goldman, Guichard-Wienhard, Labourie and others describe Hitchin's component via geometric structures on bundles over the surface. For $PSL(3,\mathbb{R})$, this geometric structure is the convex projective structure described by Goldman in \cite{Goldman}. For $n=4$, Guichard and Wienhard describe convex foliated structure on the unit tangent bundle in \cite{Guichard}. Labourie introduces the concept of an Anosov representation in \cite{Lab}. 
The drawback of these constructions is that the bundle on which the geometric structure is defined is not canonically associated to the surface. 

All these generalizations are rigid geometric structures (meaning that the local automorphism group is finite dimensional). Our generalization is not rigid in this sense but behaves as a generalization of complex structures (with local automorphism group holomorphic functions which are infinite dimensional).

We also signal a link to W-algebras since this paper gives partial answers to questions raised in an article of the first author together with Bilal and Kogan \cite{BFK}. This connection will not be treated here.

The paper is structured as follows: first, we review in Section \ref{section2} the approach to complex structures via the Beltrami differential, the most appropriate approach for our generalization. Then in Section \ref{section3}, we introduce the tool we need for generalizing the complex structure: the punctual Hilbert scheme of the plane and its zero-fiber. In the main part \ref{section4}, we define and analyze the higher complex structure. To define the moduli space of higher complex structures, we enlarge the group of diffeomorphisms and look at higher complex structures modulo Hamiltonian diffeomorphisms of the cotangent space $T^*\Sigma$ preserving the zero section setwise. We show that higher complex structure is locally trivializable. The moduli space of higher complex structures is analyzed and compared to Hitchin's component. 
In the last section \ref{section5}, we give arguments in favor for the conjectural isomorphism to Hitchin's component by constructing a spectral curve associated to a higher complex structure. In the appendix \ref{section6} we put details of proofs and computations.

Throughout the paper, $\Sigma$ denotes a connected oriented closed real surface of genus $g \geq 2$. A complex local coordinate on $\Sigma$ is denoted by $z=x+iy$ and its conjugate coordinates on $T^{*\mathbb{C}}\Sigma$ by $p$ and $\bar{p}$. The space of sections of a bundle $B$ is denoted by $\Gamma (B)$.

\section{Complex Structures on Surfaces}
\label{section2}

In this section we review the approach to complex and almost complex structures on a surface via the Beltrami differential. This will prepare the reader to the generalization which will follow in Section \ref{section4}. 

Recall that a \textbf{complex structure} in a manifold is a complex atlas with holomorphic transition functions. Retaining only the information that every tangent space $T_z\Sigma$ has the structure of a complex vector space, i.e. is equipped with an endomorphism $J(z)$ whose square is $-\id$, we obtain an \textbf{almost complex structure}.
A theorem due to Gauss and Korn-Lichtenstein states that every almost complex structure on a surface comes from a complex structure (i.e. can be integrated to a complex structure). Thus, both notions are equivalent on a surface.

So the complex structure is encoded in the operators $J(z)$. These can be better understood by diagonalization. The characteristic polynomial being $X^2+1$, the eigenvalues are $\pm i$. So we need to complexify the tangent space to see the eigendirections: $$T^{\mathbb{C}}\Sigma = T^{1,0}\Sigma \oplus T^{0,1}\Sigma$$ where $T^{1,0}\Sigma$ is the eigenspace associated to eigenvalue $i$.
Furthermore, the eigendirections are conjugated to each other: $T^{1,0}\Sigma = \overline{T^{0,1}\Sigma}$.

Therefore the complex structure is entirely encoded by $T^{0,1}\Sigma$, i.e. a direction in the complexified tangent space. Thus, we can see a complex structure as a section of the projectivized  cotangent bundle $\mathbb{P}(T^{\mathbb{C}}\Sigma)$.

Let us describe this viewpoint in coordinates. To do this, we fix a reference complex coordinate $z=x+iy$ on $\Sigma$. This gives a basis $(\partial, \bar{\partial})$ in $T^{\mathbb{C}}\Sigma$ where $\partial = \frac{1}{2}(\partial_x - i\partial_y) \text{ and } \bar{\partial} = \frac{1}{2}(\partial_x + i\partial_y)$. The generator of the linear subspace $T^{1,0}_z\Sigma$ can be normalized to be  $\bar{\partial}-\mu(z,\bar{z}) \partial$, (where $\mu$ is a coordinate on $\mathbb{C}P^1$, and thus can take infinite value). The coefficient $\mu$ is called the \textbf{Beltrami differential}. There is one condition on $\mu$, coming from the fact that the vector $\bar{\partial}-\mu \partial$ and its conjugate $\partial-\bar{\mu} \bar{\partial}$ have to be linearly independent as eigenvectors corresponding to different eigenvalues. A simple computation shows that this is equivalent to $\mu \bar{\mu} \neq 1$. If we restrict ourselves to complex structures compatible with the orientation of the surface (i.e. homotopy equivalent to the reference complex structure) this condition gives $\mu \bar{\mu} < 1$.

We have only seen the Beltrami differential in a local chart. Changing coordinates $z \mapsto w(z)$ gives $\mu(z,\bar{z}) \mapsto \frac{d\bar{z}/d\bar{w}}{dz/dw}\mu(z,\bar{z})$, so $\mu$ is of type $(-1,1)$, i.e. a section of $K^{-1}\otimes \bar{K}$ where $K=T^{*(1,0)}\Sigma$ the canonical line bundle.

To sum up, we look at a complex structure on a surface as a given linear direction in every complexified tangent space, which is the same as a 1-jet of a curve at the origin. For the generalization, we need to consider rather the \textit{cotangent space} $T^*\Sigma$ (the operators $J(z)$ also act in $T^*_z\Sigma$). Higher complex structures will be given by a $n$-jet of a curve in the complexified cotangent space. To describe this idea precisely in geometric terms, we use the punctual Hilbert scheme of the plane.

\section{Punctual Hilbert scheme of the plane}\label{section3}

We present here the tool necessary for the higher complex structure: the punctual Hilbert scheme of the plane. The \textbf{Hilbert scheme} is the parameter space of all subschemes of an algebraic variety. In general this scheme can be quite complicated but here we are in a very specific case of 0-dimensional subschemes of $\mathbb{C}^2$. Nothing new is presented here, a classical reference is Nakajima's book \cite{Nakajima}.

Consider $n$ points in the plane $\mathbb{C}^2$ as an algebraic variety, i.e. defined by some ideal $I$ in $\mathbb{C}[x,y]$. The function space $\mathbb{C}[x,y]/I$ is $n$-dimensional, since a function on $n$ points is defined by its $n$ values. So the ideal $I$ is of codimension $n$. 
This gives a simple example of a subscheme of dimension zero.
We define the \textbf{length} of a zero-dimensional subscheme to be the dimension of its function space. So the variety of $n$ distinct points is of length $n$. We will see that we get more interesting examples when two or several points collapse into one single point.
The moduli space of zero-dimensional subschemes of length $n$ is called the punctual Hilbert scheme:

\begin{definition}
The \textbf{punctual Hilbert scheme} $\Hilb^n(\mathbb{C}^2)$ of length $n$ of the plane is the set of ideals of $\mathbb{C}\left[x,y\right]$ of codimension $n$: 
$$\Hilb^n(\mathbb{C}^2)=\{I \text{ ideal of } \mathbb{C}\left[x,y\right] \mid \dim(\mathbb{C}\left[x,y\right]/I)=n \}.$$  
\noindent The subspace of $\Hilb^n(\mathbb{C}^2)$ consisting of all ideals supported on 0, i.e. whose associated algebraic variety is $(0,0)$, is called the \textbf{zero-fiber} of the punctual Hilbert scheme and is denoted by $\Hilb^n_0(\mathbb{C}^2)$.
\end{definition}

\begin{Remark}
In the literature the punctual Hilbert scheme of a space $X$ is often denoted by $X^{[n]}$. Here we will deal with several notions of Hilbert schemes, so we prefer the notation $\Hilb^n(X)$.
\end{Remark}

\noindent A theorem of Grothendieck and Fogarty asserts that $\Hilb^n(\mathbb{C}^2)$ is a smooth and irreducible variety of dimension $2n$ (see \cite{Fogarty}). The zero-fiber $\Hilb^n_0(\mathbb{C}^2)$ is an irreducible variety of dimension $n-1$, but it is in general not smooth. 

Let us try to get a feeling for the form of a generic ideal in the Hilbert scheme. Given $n$ generic distinct points in $\mathbb{C}^2$, there is a unique polynomial $Q$ of degree $n-1$ such that these points belong to the graph of $Q$, the Lagrange interpolation polynomial. So we can choose $y=Q(x)$ to be in $I$. If we denote by $x_i$ the $x$-coordinate of the $i$-th point, we get $\prod_i (x-x_i)$ in $I$. These two relations determine already the $n$ points. The ideal $I$ then has two generators and can be put into the form 
$$I=\left\langle x^n+t_1x^{n-1}+\cdots+t_n,-y+\mu_1+\mu_2x+...+\mu_nx^{n-1}\right\rangle.$$

\noindent A point in the zero-fiber of the Hilbert scheme is obtained by collapsing all $n$ points to the origin. A generic point (see \cite{Iarrob}) is obtained when the Lagrange interpolation polynomial admits a limit (for exemple if all points glide along a given curve to the origin). The only condition is that the constant term of $Q$, which is $\mu_1$, has to be zero. Since all $x_i$ become 0, all $t_i$ do as well. So we get an ideal of $\Hilb^n_0(\mathbb{C}^2)$ of the form
$$I=\left\langle x^n,-y+\mu_2x+...+\mu_nx^{n-1}\right\rangle.$$
There are other points in the zero-fiber if $n>2$. For instance in $\Hilb^3_0$ we find the ideal $\left\langle x^2,xy,y^2 \right\rangle$ which is not of the above form (it has three generators).

Notice that for $n=2$, we just get a linear non-vertical direction at the origin, i.e. a point in an affine space. 

There are several equivalent ways to look on the punctual Hilbert scheme. We discuss two of them: as blow-up of a configuration space and as the set of commuting matrices.

When we consider the set of $n$-tuples points as an algebraic variety, we do not see their order. So we see them as a point in the configuration space of (not necessarily distinct) $n$ points, which is the quotient of $(\mathbb{C}^2)^n$ by the symmetric group $\mathcal{S}_n$. This quotient space is singular, since the action of the symmetric group is not free. The theorem of Grothendieck and Fogarty also asserts that $\Hilb^n(\mathbb{C}^2)$ is a minimal resolution of singularities by blow-ups of the configuration space $\Sym^n(\mathbb{C}^2)$. The map from the Hilbert scheme to the configuration space associates to an ideal its support (points in the plane with multiplicity). This is called the \textbf{Chow map}.

To give a link to commuting matrices, we associate to an ideal $I$ of codimension $n$ the linear maps $M_x$ and $M_y$, which are the multiplication operators by $x$ and $y$ respectively in the $n$-dimensional vector space $\mathbb{C}[x,y]/I$. These are two commuting operators admitting a cyclic vector (the image of $1\in \mathbb{C}[x,y]$ in the quotient). So we obtain a map from the punctual Hilbert scheme to the space of commuting matrices with cyclic vector modulo conjugation. This is in fact a bijection:
$$\Hilb^n(\mathbb{C}^2) \cong \{A,B \in \End{\mathbb C}^n, v\in \mathbb{C}^n \mid [A,B]=0, \text{$v$ is a cyclic vector}\} / \text{ conj.}$$
The inverse map associates to an equivalence class $[(A,B)]$ the ideal $I=\{P\in \mathbb{C}[x,y] \mid P(A,B)=0\}$. The existence of the cyclic vector guarantees that $I$ is of codimension $n$.

To end the section, we also introduce the notion of the \textbf{reduced Hilbert scheme}, denoted by $\Hilb^n_{red}(\mathbb{C}^2)$. Geometrically, it corresponds to configurations of $n$ points with barycenter 0. The punctual Hilbert scheme is topologically a direct product of $\mathbb{C}^2$ and the reduced Hilbert scheme. So the dimension of $\Hilb^n_{red}(\mathbb{C}^2)$ is $2n-2$. Its precise definition as a symplectic quotient will be given in Section \ref{section5}. The main property is that the zero-fiber is a Lagrangian subspace of the reduced Hilbert scheme (see Proposition \ref{lagr}).

\section{Higher complex structures}\label{section4}

In this section, we define the higher complex structure and explore its main properties. In order to define a moduli space of higher complex structures, we need to enlarge the group of diffeomorphisms of $\Sigma$ to the space of Hamiltonian diffeomorphisms of the cotangent space $T^*\Sigma$ preserving the zero section. We then explore the local and global theory of that new structure.

\subsection{Definition and basic properties}

In Section \ref{section2}, we saw that a complex structure on a surface $\Sigma$ is uniquely given by a section $\sigma$ of $\mathbb{P}(T^{*\mathbb{C}}\Sigma)$, the (pointwise) projectivized complexified cotangent space, such that at any point $z \in \Sigma$, $\sigma(z)$ and $\bar{\sigma}(z)$ are distinct.
In the previous section, we saw that the projectivization is a special case of the zero-fiber of the punctual Hilbert scheme for $n=2$: $\mathbb{P}(T^{*\mathbb{C}}\Sigma) = \Hilb^2_0(T^{*\mathbb{C}}\Sigma)$. It is now easy to guess the generalization. 

\begin{definition}
A \textbf{higher complex structure} of order $n$ on a surface $\Sigma$, in short \textbf{$n$-complex structure}, is a section $I$ of $\Hilb^n_0(T^{*\mathbb{C}}\Sigma)$ such that at each point $z$ we have that the sum $I(z)+\bar{I}(z)$ is the maximal ideal supported at zero of $T_z^{*\mathbb{C}}\Sigma$.
\end{definition}

\begin{Remark} The space $\Hilb^n(T^{*\mathbb{C}}\Sigma)$ is the pointwise application of $\Hilb^n$ to $T^{*\mathbb{C}}_z\Sigma$ at every point $z$ of $\Sigma$. So it is a bundle of Hilbert schemes. We call it \textbf{Hilbert scheme bundle}. The same holds for the zero-fiber.
\end{Remark}

\noindent For $n=2$, the condition that $I + \bar{I}$ is maximal simply reads $\mu_2\bar{\mu}_2 \neq 1$ which is exactly what we had for the complex structure. So we recover the complex structure.

In the previous section, we saw that not all points in the zero-fiber can be written in the form of a Lagrange interpolation polynomial passing through the origin. Another important consequence of the extra condition is that it rules out non-generic ideals:

\begin{prop}\label{genericideal}
For a $n$-complex structure $I$, we can write at a point $z$ either $I(z)$ or its conjugate $\bar{I}(z)$ as $$\left\langle p^n, -\bar{p}+\mu_2(z,\bar{z})p+...+\mu_n(z,\bar{z})p^{n-1}  \right\rangle  \text{ with } \mu_2\bar{\mu}_2<1.$$ 
\end{prop}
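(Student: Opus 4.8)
The plan is to reduce the condition $I(z)+\bar I(z) = \mathfrak{m}$ (where $\mathfrak m = \langle p,\bar p\rangle$ is the maximal ideal of the local ring at the origin of $T_z^{*\mathbb C}\Sigma \cong \mathbb C^2$) to a statement about the \emph{embedding dimension} of the Artinian local algebra $A := \mathbb C[p,\bar p]/I(z)$, and then to identify $A$ explicitly. Since $I$ and $\bar I$ are supported at the origin they are contained in $\mathfrak m$, so applying the projection $\pi\colon\mathfrak m \to \mathfrak m/\mathfrak m^2$ to the hypothesis gives $\pi(I)+\pi(\bar I)=\mathfrak m/\mathfrak m^2$, a two-dimensional space spanned by the classes of $p$ and $\bar p$.

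First I would bound $\dim\pi(I)$. The quotient $(\mathfrak m/\mathfrak m^2)/\pi(I)$ is canonically the cotangent space $\mathfrak n/\mathfrak n^2$ of $A$, where $\mathfrak n$ is the maximal ideal of $A$; as $A$ is local Artinian of length $n\ge 2$, Nakayama forces $\dim \mathfrak n/\mathfrak n^2 \ge 1$, hence $\dim\pi(I)\le 1$, and likewise $\dim\pi(\bar I)\le 1$. Since the two add up to the two-dimensional $\mathfrak m/\mathfrak m^2$, each must be exactly a line and the two lines must be transverse. Writing $\pi(I)=\mathbb C\cdot(ap+b\bar p)$, conjugation gives $\pi(\bar I)=\mathbb C\cdot(\bar b p+\bar a\bar p)$, and transversality is precisely $|a|^2\neq|b|^2$. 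Up to exchanging $I$ and $\bar I$ I may assume $|b|>|a|$; normalizing the $\bar p$-coefficient to $-1$ then produces a generator with linear part $-\bar p+\mu_2 p$ where $\mu_2=-a/b$ satisfies $\mu_2\bar\mu_2=|a|^2/|b|^2<1$, which is the asserted inequality.

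The heart of the argument is then to promote this linear normalization to the full two-generator form. Transversality gave $\dim\pi(I)=1$, i.e. $\dim\mathfrak n/\mathfrak n^2=1$: the algebra $A$ has embedding dimension one, so $\mathfrak n$ is principal by Nakayama and $A$, being local Artinian of length $n$ with a principal maximal ideal, is isomorphic to $\mathbb C[t]/(t^n)$ (a curvilinear, hence smooth, point of the zero-fiber). The relation $\bar p\equiv\mu_2 p$ in $\mathfrak n/\mathfrak n^2$ shows that the class of $p$ is nonzero there (otherwise $\bar p$ would vanish too and $\mathfrak n/\mathfrak n^2$ would be zero), so $p$ generates $\mathfrak n$. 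Consequently $\bar p$ lies in $(p)$ and can be written in $A$ as $\mu_2 p+\mu_3 p^2+\dots+\mu_n p^{n-1}$, while $p^n=0$. Thus $\langle p^n,\,-\bar p+\mu_2 p+\dots+\mu_n p^{n-1}\rangle\subseteq I$, and since both ideals have codimension $n$ they coincide, giving the stated form.

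I expect the main obstacle to be the structural step identifying $A\cong\mathbb C[t]/(t^n)$: this is where the purely pointwise transversality condition $I+\bar I=\mathfrak m$ is converted into the rigidity that excludes all the non-generic ideals of the zero-fiber (such as $\langle p^2,p\bar p,\bar p^2\rangle$, which has embedding dimension two). An alternative route to the same conclusion uses the commuting-matrix description: $M_p$ and $M_{\bar p}$ are commuting nilpotents with a cyclic vector, and the relation forces $M_{\bar p}$ to be a polynomial in $M_p$, so cyclicity makes $M_p$ regular nilpotent; I would keep the embedding-dimension formulation as the cleaner one and only fall back on matrices if the normalization bookkeeping becomes opaque.
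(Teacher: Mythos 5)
Your proof is correct, and it shares the paper's two-step skeleton — first classify the linear parts of $I$ and $\bar I$, then promote the linear normalization $-\bar p+\mu_2 p$ to the full two-generator form — but your justification of the second step is genuinely different from, and tighter than, the paper's. The paper introduces $I_1$, the space of linear parts of elements of $I$, excludes $\dim I_1\in\{0,2\}$ by direct contradiction with $I\oplus\bar I=\left\langle p,\bar p\right\rangle$ and with the codimension $n\geq 2$, gets $|\mu_2|<1$ exactly as you do, and then obtains the second generator by iteratively substituting the relation $\bar p=\mu_2 p+\text{higher terms}$ into itself, truncating via ``$p^n=0$ in $I$''. Your replacement of that iteration by commutative algebra — $\dim\mathfrak{n}/\mathfrak{n}^2=1$ by transversality, Nakayama, hence $A\cong\mathbb{C}[t]/(t^n)$ with $t=p$ — supplies precisely the two points the paper leaves implicit: termination of the substitution (which rests on the unstated nilpotency of the maximal ideal modulo $I$, coming from the support-at-zero hypothesis) and the membership $p^n\in I$, which the paper asserts without argument but which in your setup falls out of the length count for the filtration $\mathfrak{n}^k=(p^k)$; your final codimension comparison $\left\langle p^n,-\bar p+\mu_2p+\dots+\mu_np^{n-1}\right\rangle\subseteq I$ then closes the proof cleanly. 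What the paper's version buys instead is constructive bookkeeping: the iteration computes the $\mu_k$ explicitly (see its $n=4$ example $\bar p=ap+abp^2+ab^2p^3$), which is useful later when the $\mu_k$ are treated as higher Beltrami coefficients. One caution about your aside: the commuting-matrix alternative as you sketch it (``the relation forces $M_{\bar p}$ to be a polynomial in $M_p$'') presupposes what must be shown, since after the linear step you only know $\bar p\equiv\mu_2 p \bmod \mathfrak{n}^2$; you were right to keep the embedding-dimension formulation as the actual proof.
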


\noindent The proof is a simple computation and can be found in the appendix \ref{appendix0}.

We call the coefficients $\mu_k$ \textbf{higher Beltrami differentials}. The proposition allows us to think of a higher complex structure as a polynomial curve in the cotangent fiber attached to each point of the surface. It is useful to either think of a "hairy" surface (with polynomial curved hairs), or a complex surface germ along the zero-section $\Sigma$ in $T^{*\mathbb{C}}\Sigma$.

In the case where $I$ is of the above form (and not its conjugate $\bar{I}$), we call the $n$-complex structure \textbf{compatible}. This is analogous to the case if Beltrami differential is of norm smaller than 1 or bigger than 1. One can pass from one to the other by conjugation.

Let us compute the global nature of the higher Beltrami differentials. We will see that $\mu_2$ is just the usual Beltrami differential, so of type $(-1,1)$.
Under a holomorphic coordinate transform $z \to z(w)$, we have $p = \frac{\partial}{\partial z} \mapsto \frac{dw}{dz}\frac{\partial}{\partial w}$ and similarly for $\bar{p}$. Hence, 
\begin{align*}
& \left\langle p^n, -\bar{p}+\mu_2(z,\bar{z})p+...+\mu_n(z,\bar{z})p^{n-1}  \right\rangle \\
& \mapsto  \left\langle \left(\frac{dw}{dz}\right)^n \left(\frac{\partial}{\partial w}\right)^n, -\frac{d\bar{w}}{d\bar{z}}\frac{\partial}{\partial \bar{w}}+\frac{dw}{dz}\mu_2(z,\bar{z})\frac{\partial}{\partial w}+...+\left(\frac{dw}{dz}\right)^{n-1}\mu_n(z,\bar{z})\left(\frac{\partial}{\partial w}\right)^{n-1}  \right\rangle \\
& = \left\langle \left(\frac{\partial}{\partial w}\right)^n, -\frac{\partial}{\partial \bar{w}}+\frac{d\bar{z}/d\bar{w}}{dz/dw}\mu_2(z,\bar{z})\frac{\partial}{\partial w}+...+\frac{d\bar{z}/d\bar{w}}{(dz/dw)^{n-1}}\mu_n(z,\bar{z})\left(\frac{\partial}{\partial w}\right)^{n-1}  \right\rangle
\end{align*}

\noindent Thus, we see that for $m=2,...,n$ we get $$\mu_{m}(w,\bar{w}) = \frac{d\bar{z}/d\bar{w}}{(dz/dw)^{m-1}}\mu_{m}(z,\bar{z}).$$
So $\mu_{m}$ is of type $(-m+1,1)$, i.e. a section of $K^{-m+1}\otimes \bar{K}$ (where $K$ is the canonical line bundle).

\subsection{Higher diffeomorphisms}

We wish to define a moduli space of higher complex structures which is finite-dimensional. Higher complex structures "live" in a neighborhood of the zero-section of $T^{*\mathbb{C}}\Sigma$. A diffeomorphism of $\Sigma$ extends linearly to $T^*\Sigma$ and its complexification. Roughly speaking it can only act linearly on the polynomial curve (given by the higher complex structure), so the quotient of all $n$-complex structures by diffeomorphisms of $\Sigma$ is infinite-dimensional. Therefore, we have to enlarge the equivalence relation and quotient by a larger group. What we need are polynomial changes in the cotangent bundle. These can be obtained by symplectomorhpisms of $T^*\Sigma$ generated by a Hamiltonian (a function on $T^*\Sigma$):

\begin{definition}
A \textbf{higher diffeomorphism} of a surface $\Sigma$ is a Hamiltonian diffeomorphism of $T^*\Sigma$ preserving the zero-section $\Sigma \subset T^*\Sigma$ setwise. The set of higher diffeomorphisms is denoted by $\Symp_0(T^*\Sigma)$.

\noindent A \textbf{higher vector field} is a Hamiltonian vector field on the cotangent bundle tangent to the zero-section. The space of such vector fields is denoted by $\Ham_0(T^*\Sigma)$.

\noindent We say that a higher vector field is \textbf{of order $n$} if its Hamiltonian $H$ is a homogenous polynomial of degree $n$.
\end{definition}

\noindent A higher vector field of order 1 is just a vector field of the surface $\Sigma$ linearly extended to the cotangent space.

Preserving the zero-section means that the Hamiltonian $H$ of a higher vector field can be chosen to vanish on the zero-section. In coordinates, this means that one can write the Hamiltonian as $H(z,\bar{z},p,\bar{p}) = \sum_{k,l} w_{k,l}(z,\bar{z})p^k\bar{p}^l$ with $w_{0,0}= 0$. Furthermore, since the Hamiltonian is a real function written in complex coordinates, we have the condition $w_{l,k} = \overline{w_{k,l}}$.

Let us see how a higher diffeomorphism acts on the $n$-complex structure. Roughly speaking, a diffeomorphism of the cotangent bundle acts on the space of sections, and so also on the space of $n$-tuples of sections (corresponds to $n$ points in each fiber), so also on the zero-fiber of the Hilbert scheme (which can be seen as the limit when all $n$ points collapse to the origin). In this way, a higher diffeomorphism acts on a $n$-complex structure.

To be more precise, we need first to understand the variation of an ideal in the space of all ideals:

\begin{prop}
The space of infinitesimal variations of an ideal $I$ in a ring $A$ is the set of all $A$-module homomorhpisms from $I$ to $A/I$.
\end{prop}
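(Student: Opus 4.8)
The plan is to translate ``infinitesimal variation of $I$'' into the language of first-order deformations, namely flat families of ideals over the ring of dual numbers $A_\epsilon := A \otimes_{\mathbb{C}} \mathbb{C}[\epsilon]/(\epsilon^2)$. Concretely, an infinitesimal variation is an ideal $\tilde{I} \subset A_\epsilon$ whose reduction modulo $\epsilon$ equals $I$ and which is flat over $\mathbb{C}[\epsilon]/(\epsilon^2)$; the flatness is equivalent to the bookkeeping condition $\tilde{I} \cap \epsilon A_\epsilon = \epsilon I$. The goal is then to exhibit two mutually inverse maps between the set of such $\tilde{I}$ and $\Hom_A(I, A/I)$; this is exactly Grothendieck's computation of the tangent space to the Hilbert (or Quot) scheme, specialized to the present situation.

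First I would construct the map from deformations to homomorphisms. Given $\tilde{I}$, every $f \in I$ admits a lift $\tilde{f} = f + \epsilon g \in \tilde{I}$ with $g \in A$. The flatness condition guarantees that $g$ is well-defined modulo $I$: two lifts of the same $f$ differ by an element of $\tilde{I} \cap \epsilon A_\epsilon = \epsilon I$. One therefore sets $\phi(f) := g \bmod I \in A/I$. Additivity of $\phi$ is immediate, and $A$-linearity follows because $a\tilde{f} = af + \epsilon a g$ lies in $\tilde{I}$ and is a lift of $af$, whence $\phi(af) = ag \bmod I = a\,\phi(f)$.

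Conversely, given $\phi \in \Hom_A(I, A/I)$, I would define $\tilde{I} := \{\, f + \epsilon g \mid f \in I,\ g \in A,\ g \bmod I = \phi(f)\,\}$ and verify that it is an ideal of $A_\epsilon$ reducing to $I$ and satisfying the flatness condition. Stability under multiplication by $A_\epsilon$ is precisely where the $A$-linearity of $\phi$ enters: for $a + \epsilon b \in A_\epsilon$ one computes $(a+\epsilon b)(f + \epsilon g) = af + \epsilon(ag + bf)$, and since $bf \in I$ one gets $(ag + bf) \bmod I = a\,\phi(f) = \phi(af)$, so the product again lies in $\tilde{I}$. The flatness condition holds because $\epsilon g \in \tilde{I}$ forces $g \bmod I = \phi(0) = 0$, i.e. $g \in I$. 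A direct check confirms that the two constructions are mutually inverse.

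The main obstacle is not any single computation but rather pinning down the correct notion of ``infinitesimal variation'' and isolating the role of flatness. The flatness condition $\tilde{I} \cap \epsilon A_\epsilon = \epsilon I$ is the conceptual heart: in one direction it is exactly what makes the lift $g$ well-defined modulo $I$, and in the other direction the $A$-module structure of the homomorphism $\phi$ is precisely what is needed for $\tilde{I}$ to be an honest ideal rather than merely an additive subgroup. Once this dictionary is set up, the remaining verifications are routine.
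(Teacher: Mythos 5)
Your proof is correct, and in fact the paper offers no proof of this proposition at all --- it is stated as a known fact (it is Grothendieck's classical computation of the tangent space to the Hilbert scheme), so your dual-numbers argument supplies exactly the standard justification the authors are implicitly invoking. One small remark: for Hilbert-scheme purposes one usually imposes flatness over $\mathbb{C}[\epsilon]/(\epsilon^2)$ on the quotient $A_\epsilon/\tilde{I}$ rather than on $\tilde{I}$ itself, but under your hypothesis that $\tilde{I}$ reduces to $I$ modulo $\epsilon$ both conditions are equivalent to your criterion $\tilde{I} \cap \epsilon A_\epsilon = \epsilon I$, so your formulation is sound; and your identification of this condition as simultaneously what makes the lift $g$ well defined modulo $I$ and what the $A$-linearity of $\phi$ must reproduce in the converse direction is precisely the right conceptual point. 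Your argument also explains the sentence the paper draws from the proposition --- that a variation of $I$ is computed by varying its generators modulo $I$ --- since an $A$-module map $I \to A/I$ is determined by its values on generators.
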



\noindent So to compute the variation of an ideal, all we need is to compute the variation of its generators modulo $I$.
These generators are polynomial functions. A general fact of symplectic geometry asserts that the variation of a function $f$ under a Hamiltonian flow generated by a Hamiltonian $H$ is given by the Poisson bracket $\{H,f\}$.
Therefore to compute the action of a higher diffeomorphism generated by a Hamiltonian $H$, we only have to compute the Poisson bracket of $H$ with the generators of $I$, modulo~$I$.

In coordinates, we find that the variation of the generators of the ideal $I$ is given by $\{H,p^n\} \mod I$ and $\{H,-\bar{p}+\mu_2p+...+\mu_np^{n-1}\} \mod I$.

Since we mod out by $I$, a Hamiltonian of order $n$ or higher will have no effect on a $n$-complex structure. So only polynomial Hamiltonians of degree at most $n-1$ act nontrivially. The Hamiltonians of degree $\ge n$ generate a normal subgroup in $\Symp_0(T^*\Sigma)$ acting trivially on $n$-complex structure. Therefore the higher diffeomorphisms act through its quotient. In the following subsection we compute precisely the variation of the higher Beltrami differentials under a Hamiltonian and we deduce the local theory of higher complex structures.

\subsection{Local theory}

In this subsection, we are in an open neighborhood of 0 in $\mathbb{C}$. We will prove that a higher complex structure can locally be trivialized by a higher diffeomorphism, which means that we can make $\mu_2(z,\bar{z})=...=\mu_n(z,\bar{z})=0$ in a neighborhood of 0. Before doing so, we have to compute the variation of the higher complex structure by a higher diffeomorphism.

As seen in the previous subsection, we have to compute Poisson brackets modulo the ideal $I$. A small argument will simplify a lot the computations:

\begin{lemma}
Let $I=\left\langle f_1, ..., f_r \right\rangle$ be an ideal of $\mathbb{C}[z,\bar{z},p,\bar{p}]$ such that $\{f_i, f_j\} = 0 \mod I$ for all $i$ and $j$. Then for all polynomials $H$ and all $k \in \{1,...,r\}$ we have $\{H, f_k\} \mod I = \{H \mod I, f_k\} \mod I$.
\end{lemma}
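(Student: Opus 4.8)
The plan is to reinterpret the asserted equality as a well-definedness statement: the operation $H \mapsto \{H, f_k\} \bmod I$ descends to a map on the quotient $A/I$, where $A = \mathbb{C}[z,\bar{z},p,\bar{p}]$. Concretely, writing $H \bmod I$ on the right-hand side means choosing any representative of the class (for instance a reduced normal form, which is exactly why the lemma simplifies computations), so the claim is equivalent to saying that the class $\{H, f_k\} \bmod I$ is unchanged when $H$ is replaced by $H + g$ for any $g \in I$. By bilinearity of the Poisson bracket this amounts to the single inclusion $\{g, f_k\} \in I$ for all $g \in I$; that is, $\{I, f_k\} \subseteq I$.

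First I would write an arbitrary element of $I$ as $g = \sum_{i=1}^r a_i f_i$ with $a_i \in A$, and expand using the Leibniz rule $\{ab,c\} = a\{b,c\} + b\{a,c\}$ for the Poisson bracket:
$$\{g, f_k\} = \sum_{i=1}^r \left( a_i \{f_i, f_k\} + f_i \{a_i, f_k\} \right).$$
Then I would treat the two families of summands separately. The terms $f_i \{a_i, f_k\}$ lie in $I$ immediately, since $f_i \in I$ and $I$ is an ideal. For the terms $a_i \{f_i, f_k\}$, I would invoke the hypothesis $\{f_i, f_k\} \equiv 0 \bmod I$, which says precisely that $\{f_i, f_k\} \in I$; multiplying by $a_i$ keeps the product in $I$ because $I$ is an ideal. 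Hence every summand lies in $I$, so $\{g, f_k\} \in I$, which is the desired inclusion.

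I do not expect a genuine obstacle here: the whole argument is the Leibniz rule combined with the ideal axiom, and the only point requiring care is the correct parsing of the statement, namely that $H \bmod I$ on the right denotes a choice of representative and that the asserted identity is really an independence-of-choice statement. It is worth emphasizing where each hypothesis enters: the ideal property handles the $f_i \{a_i, f_k\}$ terms unconditionally, whereas the commutativity assumption $\{f_i, f_j\} \equiv 0 \bmod I$ is exactly what is needed to control the $a_i \{f_i, f_k\}$ terms, which in its absence need not lie in $I$.
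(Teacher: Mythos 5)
Your proof is correct and is essentially the paper's own argument: the paper likewise reduces the claim to replacing $H$ by $H+gf_l$ and applies the Leibniz rule, with $g\{f_l,f_k\}\in I$ by the commutativity hypothesis and $\{g,f_k\}f_l\in I$ by the ideal property. Your version merely spells out the reduction to well-definedness and sums over a general element $g=\sum_i a_i f_i$ of $I$, which is the same computation by bilinearity.
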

\begin{proof}
The only thing to show is that if we replace $H$ by $H+gf_l$ for some polynomial $g$ and some $l \in \{1,...,r\}$, the expression does not change. Indeed, 
$\{H+g f_l, f_k\}=\{H, f_k\}+g\{f_l,f_k\}+\{g,f_k\}f_l = \{H, f_k\}  \mod I$ using the assumption.
\end{proof}

\noindent For our ideal $I = \left\langle p^n,-\bar{p}+\mu_2p+...+\mu_np^{n-1} \right \rangle$, we have $\{p^n,-\bar{p}+\mu_2 p+...+\mu_n p^{n-1}\} = np^{n-1}(\partial\mu_2 p+...+\partial\mu_n p^{n-1})=0 \mod I$, so we can use the proposition. A Hamiltonian $H$ modulo $I$ can thus always be written as $H=\sum_k w_k p^k$.

Another small argument is that the first generator, $p^n$, does not change : $\{H,p^n\} \mod I = np^{n-1}\partial H \mod I = 0$ since there is no constant term in $H$. So we only have to settle the second generator.

Let us compute the variation of the second generator under the Hamiltonian $H=w_kp^k$.
\begin{align*}
\{w_kp^k, -\bar{p}+\mu_2p+...+\mu_np^{n-1}\} =& kw_kp^{k-1}(\partial\mu_2p+...+\partial\mu_np^{n-1}) \\
& -\partial w_kp^k(\mu_2+2\mu_3p+...+(n-1)\mu_np^{n-2})+\bar{\partial}w_k p^k \\
=& p^k(\bar{\partial}w_k-\mu_2\partial w_k+k\partial\mu_2 w_k) \\
&+\sum_{l=1}^{n-1-k} p^{k+l} (k\partial\mu_{l+2}w_k-(l+1)\mu_{l+2}\partial w_k) \mod I
\end{align*}

\noindent Thus we obtain
\begin{prop}\label{varmu}
The variations $\delta \mu_l$ are given by 
$$\delta \mu_l = \left \{ \begin{array}{cl}
(\bar{\partial}-\mu_2\partial+k\partial\mu_2)w_k & \text{ if  } l=k+1 \\
(k\partial\mu_{l-k+1}-(l-k)\mu_{l-k+1}\partial)w_k &\text{ if  } l>k+1 \\
0 & \text{ if  } l<k+1.
\end{array} \right.$$
\end{prop}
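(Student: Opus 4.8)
The plan is to read off the three cases directly from the Poisson-bracket computation, exploiting linearity to reduce to a single homogeneous Hamiltonian. Since $\{\cdot,\cdot\}$ is linear in its first slot and the preceding Lemma lets me replace any Hamiltonian by its reduction $H \bmod I = \sum_k w_k p^k$, it suffices to prove the formula for one term $H = w_k p^k$ and then sum over $k$; the stated right-hand sides are manifestly additive in this way.

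First I would recall that the infinitesimal variation of the ideal $I = \langle p^n, f\rangle$, with $f = -\bar p + \mu_2 p + \cdots + \mu_n p^{n-1}$, is captured by the variations of its two generators modulo $I$. The first generator is inert: $\{H, p^n\} = n p^{n-1}\,\partial H \equiv 0 \bmod I$, because $H$ has no constant term and $np^{n-1}$ times anything of positive $p$-degree lands in $\langle p^n\rangle$. Hence the entire variation lives in $\delta f = \{H,f\} \bmod I$, and since $\mu_l$ is precisely the coefficient of $p^{l-1}$ in $f$, the quantity $\delta\mu_l$ is by definition the coefficient of $p^{l-1}$ in $\delta f$.

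Next I would compute the bracket $\{w_k p^k, f\}$ using the canonical Poisson structure in the coordinates $(z,\bar z,p,\bar p)$, namely $\{F,G\} = \partial_p F\,\partial G - \partial F\,\partial_p G + \partial_{\bar p} F\,\bar\partial G - \bar\partial F\,\partial_{\bar p} G$. With $w_k$ independent of $p,\bar p$ this produces the three-term expression displayed above the statement, which after collecting powers of $p$ and discarding everything of $p$-degree $\geq n$ reads
$$\{w_k p^k, f\} \equiv p^k(\bar\partial - \mu_2\partial + k\,\partial\mu_2)w_k + \sum_{l'=1}^{\,n-1-k} p^{k+l'}\bigl(k\,\partial\mu_{l'+2} - (l'+1)\mu_{l'+2}\,\partial\bigr)w_k \bmod I.$$

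The remaining work is purely a matter of matching exponents. The isolated term at $p^k$ corresponds to $l-1 = k$, i.e. $l = k+1$, giving the first case $\delta\mu_{k+1} = (\bar\partial - \mu_2\partial + k\,\partial\mu_2)w_k$. A summand at $p^{k+l'}$ with $l' \geq 1$ corresponds to $l = k+l'+1 > k+1$; substituting $l' = l-k-1$ (so that $l'+2 = l-k+1$ and $l'+1 = l-k$) turns its coefficient into $(k\,\partial\mu_{l-k+1} - (l-k)\mu_{l-k+1}\,\partial)w_k$, the second case. Finally there is no contribution below $p^k$, so $\delta\mu_l = 0$ whenever $l-1 < k$, i.e. $l < k+1$, the third case. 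The one spot deserving care is this index substitution together with the check that the summation range $1 \le l' \le n-1-k$ matches exactly $k+2 \le l \le n$, so that no $\mu$ outside the admissible list $\mu_2,\dots,\mu_n$ is ever invoked; everything else is the routine Poisson-bracket calculation already carried out.
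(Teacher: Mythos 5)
Your proposal is correct and takes essentially the same route as the paper: invoke the lemma (whose hypothesis $\{p^n,f\}\equiv 0 \bmod I$ the paper verifies in one line, and which you use without restating that check) to reduce $H$ to $\sum_k w_k p^k$, note that the generator $p^n$ is inert, expand $\{w_k p^k, f\}$ with the canonical bracket, and read off $\delta\mu_l$ as the coefficient of $p^{l-1}$ modulo $I$. Your explicit index substitution $l'=l-k-1$ and the verification that $1\le l'\le n-1-k$ corresponds to $k+2\le l\le n$ merely make precise the bookkeeping the paper leaves implicit.
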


\noindent Now, we are ready to state the local triviality of higher complex structure:

\begin{thm}\label{loctrivial}
The $n$-complex structure can be locally trivialized, i.e. there is a higher diffeomorphism which sends the structure to $(\mu_2(z,\bar{z}),...,\mu_n(z,\bar{z}))=(0,...,0)$ for all small $z$.
\end{thm}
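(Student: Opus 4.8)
The plan is to read off from Proposition \ref{varmu} that the variation of the higher Beltrami differentials under a Hamiltonian $H = w_k p^k$ is triangular, and to normalize the $\mu_l$ one index at a time. Indeed, the proposition yields $\delta\mu_l = 0$ for $l < k+1$, a leading term $\delta\mu_{k+1} = (\bar\partial - \mu_2\partial + k\partial\mu_2)w_k$, and contributions to the strictly higher $\mu_l$ only. Thus a generator of order $k$ fixes $\mu_2, \ldots, \mu_k$ and acts on $\mu_{k+1}$ through a first-order operator whose symbol is $\bar\partial$. This dictates normalizing in increasing order: first $\mu_2$, then $\mu_3$, and so on up to $\mu_n$, each step leaving the previously trivialized differentials untouched.

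First I would dispose of $\mu_2$. An order-$1$ higher diffeomorphism is the cotangent lift of a diffeomorphism of $\Sigma$, and making $\mu_2$ vanish is precisely solving the Beltrami equation $\bar\partial w = \mu_2\partial w$. For smooth $\mu_2$ with $\mu_2\bar\mu_2 < 1$ (Proposition \ref{genericideal}), the classical Korn-Lichtenstein theorem recalled in Section \ref{section2} furnishes a local coordinate $w$ with $\partial w \neq 0$ in which $\mu_2 = 0$. Pulling the ideal back by this coordinate change gives again a compatible $n$-complex structure (Proposition \ref{genericideal}, since now $\mu_2\bar\mu_2 = 0 < 1$) with some new smooth differentials $\mu_3, \ldots, \mu_n$.

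With $\mu_2 = 0$ the remaining steps linearize. To remove $\mu_{k+1}$, for $k = 2, \ldots, n-1$ and assuming $\mu_2 = \cdots = \mu_k = 0$, I would run the Hamiltonian flow of $w_k p^k$. Since this generator does not move $\mu_2, \ldots, \mu_k$, they stay zero along the flow, the operator $\bar\partial - \mu_2\partial + k\partial\mu_2$ collapses to $\bar\partial$, and the induced evolution reads
\begin{equation*}
\frac{d}{dt}\mu_{k+1} = \bar\partial w_k ,
\end{equation*}
with right-hand side constant in $t$; hence $\mu_{k+1}(1) = \mu_{k+1}(0) + \bar\partial w_k$. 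Solving the inhomogeneous equation $\bar\partial w_k = -\mu_{k+1}$, which is always locally solvable by the Cauchy-Pompeiu formula, annihilates $\mu_{k+1}$. The higher differentials $\mu_{k+2}, \ldots, \mu_n$ are modified, but only they, and they are treated at later steps. One checks moreover that the full induced system on $(\mu_{k+1}, \ldots, \mu_n)$ is triangular and linear in the $\mu$'s once $w_k$ is fixed, so its solution is polynomial in $t$ and the time-$1$ flow genuinely exists even though $H$ is nonlinear in $p$. Composing the Korn-Lichtenstein coordinate change with these $n-2$ flows produces a single higher diffeomorphism achieving $(\mu_2, \ldots, \mu_n) = (0, \ldots, 0)$ near $0$.

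The main obstacle is concentrated in the $\mu_2$-step: it is the unique place where a genuinely nonlinear PDE must be solved, and I would meet it by invoking the integrability of almost complex structures rather than by integrating Proposition \ref{varmu} directly. Everything afterwards is governed by the constant-coefficient operator $\bar\partial$, so the only remaining point needing care is that the later flows never revive an already-vanished lower differential. This is guaranteed by the vanishing $\delta\mu_l = 0$ for $l < k+1$ in Proposition \ref{varmu}, which is exactly what makes the normalization in increasing order of the index go through.
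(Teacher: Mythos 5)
Your strategy is in substance identical to the paper's own proof in appendix \ref{appendix1}: the induction there on $n$ is exactly your increasing-order normalization, with the Gauss/Korn--Lichtenstein theorem as the base case, the observation that once $\mu_2=\cdots=\mu_k=0$ the variation formula of Proposition \ref{varmu} collapses to $\frac{d}{dt}\mu_{k+1}=\bar{\partial}w_k$ with time-independent right-hand side, and the inversion of $\bar{\partial}$ by the Cauchy transform $Tf(z)=\frac{1}{2\pi i}\int\frac{f(\zeta)}{\zeta-z}\,d\zeta\wedge d\bar{\zeta}$. The triangularity remark guaranteeing that later steps do not revive lower differentials is the same mechanism the paper uses.

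There are, however, two points where your write-up has real gaps. First, $H=w_kp^k$ is not a real function on $T^*\Sigma$, so it does not generate a higher diffeomorphism at all; the paper takes $H=w_kp^k+\bar{w}_k\bar{p}^k$. The conjugate term must then be checked to be harmless: modulo $I$ one has $\bar{p}\equiv\mu_{k+1}p^k+\cdots+\mu_np^{n-1}$, so $\bar{w}_k\bar{p}^k$ reduces to a Hamiltonian whose lowest degree in $p$ is $k^2\geq k+2$, and by Proposition \ref{varmu} it therefore only perturbs $\mu_l$ with $l\geq k^2+1$ --- contamination your later steps absorb, but without this check your claim that the generator fixes $\mu_2,\dots,\mu_{k+1}$ concerns the wrong (complex-valued) Hamiltonian. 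Second, and more seriously, your justification of the existence of the time-$1$ flow is a non sequitur: solvability, even polynomiality in $t$, of the induced triangular system on $(\mu_{k+1},\dots,\mu_n)$ does not by itself produce a higher diffeomorphism. The moduli space is a quotient by genuine Hamiltonian diffeomorphisms of $T^*\Sigma$, and the Hamiltonian vector field of a polynomial of degree $k\geq 2$ in the fiber variables need not be complete --- trajectories can escape to infinity in the $p$-direction (or leave the coordinate patch on which $w_k=-T\mu_{k+1}$ is defined) in finite time. The paper closes exactly this gap with the cutoff $w_k(z,\bar{z})\,\beta(p,\bar{p})$, where $\beta$ is a bump function equal to $1$ near $p=\bar{p}=0$: since the action on the $n$-complex structure is computed modulo $I$, i.e. only through the jet of $H$ along the zero-section, the cutoff does not change the evolution of the $\mu$'s, while it makes the vector field compactly supported and hence integrable up to time $1$. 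With these two repairs your argument coincides with the paper's.
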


\noindent The proof is in the spirit of the classical proof of Darboux theorem on local theory of symplectic structures. This can be found in the appendix \ref{appendix1}.


\subsection{Moduli space}

We are finally ready to define and study the moduli space of higher complex structures. We then show that it is a contractible ball of dimension $(n^2-1)(g-1)$ and describe its tangent and cotangent space.

\begin{definition}
The space of all compatible $n$-complex structures modulo higher diffeomorphisms is called the \textbf{geometric Hitchin space} and denoted by $\hat{\mathcal{T}}^n_{\Sigma}$. 
In formula: $$\hat{\mathcal{T}}^n_{\Sigma} = \Gamma(\Hilb^n_0(T^{*\mathbb{C}}\Sigma)) / \Symp_0(T^*\Sigma)$$
\end{definition}

\noindent Recall that an $n$-complex structure is compatible with orientation if the ideals $I$ are of the form $\left \langle p^n, -\bar{p}+\mu_2 p+...+\mu_n p^{n-1} \right \rangle$ with $\mu_2\bar{\mu}_2<1$. Using complex conjugation we get another moduli space for which $\bar{I}$ is of this form.

Since a higher diffeomorphism of order 1 is a usual diffeomorphism and only Hamiltonians of order at most $n-1$ act non-trivially on $n$-complex structures, we recover for $n=2$ the usual Teichm\"uller space: $$\hat{\mathcal{T}}^2_{\Sigma}=\mathcal{T}_{\Sigma}.$$
Our main result is
\begin{thm}
For a surface $\Sigma$ of genus $g\geq 2$ the geometric Hitchin space $\hat{\mathcal{T}}^n_{\Sigma}$ is a contractible manifold of complex dimension $(n^2-1)(g-1)$. In addition, its cotangent space at any point $\mu=(\mu_2,...,\mu_n)$ is given by 
$$T^*_{\mu}\hat{\mathcal{T}}^n_{\Sigma} = \bigoplus_{m=2}^{n} H^0(\Sigma,K^m)$$
In addition, there is a forgetful map $\hat{\mathcal{T}}^n_{\Sigma} \rightarrow \hat{\mathcal{T}}^{n-1}_{\Sigma}$.
\end{thm}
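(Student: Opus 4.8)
The plan is to reduce the statement to a linear deformation computation and then to propagate it up the tower of orders via the forgetful map, by induction on $n$. At a compatible structure $\mu=(\mu_2,\dots,\mu_n)$ the tangent space to the moduli space is the cokernel of the infinitesimal gauge action
$$D\colon\ \bigoplus_{k=1}^{n-1}\Gamma(K^{-k})\ \longrightarrow\ \bigoplus_{m=2}^{n}\Gamma(K^{-m+1}\otimes\bar K),\qquad (w_1,\dots,w_{n-1})\longmapsto(\delta\mu_2,\dots,\delta\mu_n),$$
with components read off from Proposition \ref{varmu}; here $w_k\in\Gamma(K^{-k})$ is the coefficient of $p^k$ in the Hamiltonian taken modulo $I$, and Hamiltonians of degree $\ge n$ act trivially. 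The first observation is that $D$ is lower triangular for the gradings $k$ and $m$: the generator $w_k$ moves $\mu_\ell$ only for $\ell\ge k+1$, and its leading (diagonal) component is
$$D_k\colon\ \Gamma(K^{-k})\to\Gamma(K^{-k}\otimes\bar K),\qquad D_k w_k=(\bar\partial-\mu_2\partial+k\,\partial\mu_2)w_k.$$
I would identify $D_k$ with the Dolbeault operator of the line bundle $K^{-k}$ computed in the complex structure $J_{\mu_2}$ defined by the first Beltrami differential: the term $k\,\partial\mu_2$ is exactly the cocycle correction for $K^{-k}$, and the compatibility $\mu_2\bar\mu_2<1$ is precisely the condition making $\bar\partial-\mu_2\partial$ elliptic. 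Thus $\operatorname{coker}D_k\cong H^1(\Sigma,K^{-k}_{\mu_2})$, the cohomology of $K^{-k}$ in the structure $J_{\mu_2}$.

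Rather than diagonalise $D$ globally, I would work through the forgetful map. Proposition \ref{varmu} shows that the top Hamiltonian $w_{n-1}$ moves only $\mu_n$ (via $D_{n-1}$), and that no variation $\delta\mu_\ell$ with $\ell\le n-1$ involves $\mu_n$; hence the truncation $\langle p^n,-\bar p+\sum\mu_kp^{k-1}\rangle\mapsto\langle p^{n-1},-\bar p+\sum_{k\le n-1}\mu_kp^{k-1}\rangle$ is $\Symp_0(T^*\Sigma)$-equivariant and descends to a well-defined map $\hat{\mathcal{T}}^n_{\Sigma}\to\hat{\mathcal{T}}^{n-1}_{\Sigma}$. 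Over the point represented by $(\mu_2,\dots,\mu_{n-1})$ I would compute the fibre: gauge-fixing the $(n-1)$-truncation to this representative exhausts the lower Hamiltonians $w_1,\dots,w_{n-2}$, whose infinitesimal stabiliser is $\ker D^{(n-1)}$; the triangular structure together with $H^0(\Sigma,K^{-k})=0$ for $k\ge1$ (valid since $\deg K^{-k}<0$ for $g\ge2$) forces $\ker D^{(n-1)}=0$. Hence the fibre is exactly
$$\Gamma(K^{-n+1}\otimes\bar K)\big/\operatorname{Im}D_{n-1}=\operatorname{coker}D_{n-1}\cong H^1(\Sigma,K^{-(n-1)}_{\mu_2}).$$
Since $H^0(\Sigma,K^{-(n-1)})=0$, Riemann--Roch gives $\dim_{\mathbb C}H^1=(n-1)(2g-2)+(g-1)=(2n-1)(g-1)$, and Serre duality identifies the cotangent space of the fibre with $H^0(\Sigma,K^{n})$.

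With this the theorem follows by induction on $n$, the base case being $\hat{\mathcal{T}}^2_{\Sigma}=\mathcal{T}_{\Sigma}$, the Teichm\"uller ball of dimension $3(g-1)$. The forgetful map realises $\hat{\mathcal{T}}^n_{\Sigma}$ as an affine bundle over $\hat{\mathcal{T}}^{n-1}_{\Sigma}$ whose fibre is the vector space $H^1(\Sigma,K^{-(n-1)})$; an affine bundle with contractible fibre over a contractible base is contractible, so contractibility propagates. The dimensions add, giving
$$\dim_{\mathbb C}\hat{\mathcal{T}}^n_{\Sigma}=\sum_{k=1}^{n-1}(2k+1)(g-1)=(n^2-1)(g-1),$$
and the cotangent spaces of the successive fibres stack up to yield $T^*_\mu\hat{\mathcal{T}}^n_{\Sigma}=\bigoplus_{m=2}^{n}H^0(\Sigma,K^m)$; at the trivial structure $\mu=0$ all off-diagonal terms of $D$ vanish, $D$ is diagonal, and this is literally the direct sum of the spaces of holomorphic $m$-differentials.

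The step I expect to be the main obstacle is analytic, not algebraic: upgrading the set-theoretic quotient $\Gamma(\Hilb^n_0(T^{*\mathbb{C}}\Sigma))/\Symp_0(T^*\Sigma)$ to a smooth finite-dimensional manifold, and the forgetful map to a genuine smooth affine bundle. This calls for a slice theorem for the $\Symp_0(T^*\Sigma)$-action: properness of the action, infinitesimal freeness (which is $\ker D=0$, again from the triangular structure), and elliptic regularity guaranteeing that the groups $H^1(\Sigma,K^{-(n-1)}_{\mu_2})$, of locally constant rank, assemble into a smooth vector bundle over $\hat{\mathcal{T}}^{n-1}_{\Sigma}$. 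The local triviality Theorem \ref{loctrivial} supplies the essential geometric input that there are no local moduli, so that near any structure the quotient is modelled on the finite-dimensional cohomology above; the remaining index and dimension bookkeeping, via Proposition \ref{varmu} and Riemann--Roch, is then routine.
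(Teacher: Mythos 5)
Your proof is correct and reaches the same conclusions, but by a genuinely different route than the paper's. The paper does not induct: it invokes local triviality (Theorem \ref{loctrivial}) to reduce to $\mu=0$, where the gauge action becomes the diagonal operator $(\bar{\partial}w_1,\dots,\bar{\partial}w_{n-1})$, dualizes each factor against the pairing $\int_\Sigma t_m\,\delta\mu_m$ by integration by parts to get $\bigoplus_m H^0(\Sigma,K^m)$ directly, computes the dimension by Riemann--Roch on these $H^0$'s, proves contractibility by the explicit retraction $[(1-t)(\mu_2,\dots,\mu_n)]$, and defines the forgetful map invariantly by $I\mapsto I+\langle p,\bar{p}\rangle^{n-1}$. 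You instead keep $\mu$ arbitrary, exploit the triangularity of Proposition \ref{varmu} to identify the diagonal blocks $\bar{\partial}-\mu_2\partial+k\,\partial\mu_2$ with Dolbeault operators of $K^{-k}$ in the complex structure $J_{\mu_2}$ (correct: for $k=1$ this is the classical variation of a Beltrami differential, and ellipticity is exactly $\mu_2\bar{\mu}_2<1$), and induct up the tower, realizing $\hat{\mathcal{T}}^n_{\Sigma}\to\hat{\mathcal{T}}^{n-1}_{\Sigma}$ as an affine bundle with fibre $\operatorname{coker}D_{n-1}\cong H^1(\Sigma,K^{-(n-1)})$, dualized by Serre duality. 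Your route makes explicit what the paper leaves implicit: infinitesimal freeness of the gauge action ($\ker D=0$ from $H^0(\Sigma,K^{-k})=0$), a structural explanation of the forgetful map as a fibration, and a cotangent computation valid at arbitrary $\mu$ whose adjoint triangular system matches the coupled equations of Appendix \ref{appendix2}. The paper's route buys the coordinate-free forgetful map and a contractibility argument independent of any bundle structure --- relevant because your contractibility rests on the affine bundle being a genuine smooth locally trivial fibration, exactly the analytic gap you honestly flag (the paper's retraction has a symmetric weakness: its well-definedness on equivalence classes is asserted, not proven). One precision worth recording: stacking fibre cotangents a priori yields only a filtration of $T^*_\mu\hat{\mathcal{T}}^n_{\Sigma}$ with graded pieces $H^0(\Sigma,K^m)$, the splitting being non-canonical for $\mu\neq0$; but this matches the paper's own level of precision, which interprets the summands via the complex structure associated to $I$.
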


\noindent We essentially show the existence of a cotangent space at every point, so we have a manifold. The dimension of the cotangent space is computed by Riemann-Roch. We do not enter into details on issues about infinite-dimensional manifolds and quotients. 

\begin{Remark}
The proof for the contractibility in the initial publication (IMRN 20/2021) is erroneous. Many thanks to Alex Nolte for pointing this out to us. A correct proof using the notion of harmonic $n$-complex structures can be found in his paper \cite{Nolte}. 
\end{Remark}

\begin{Remark}
As pointed out by Alex Nolte to us, there is a lack in the proof below: the existence of a tangent space at every point is not sufficient to be a manifold. The missing piece is the property of being Hausdorff. The action of higher diffeomorphisms being non-proper, this issue is non-trivial. We refer to his paper \cite{Nolte} for a proof that the moduli space $\hat{\mathcal{T}}^n_{\Sigma}$ is indeed a Hausdorff topological space.
\end{Remark}

\begin{proof}
We start by showing the manifold structure. For that, we examine the infinitesimal variation around any point. This will also give a description of the tangent and cotangent space. By definition, we have $$\hat{\mathcal{T}}^n_{\Sigma} = \{(\mu_2,...,\mu_n) \mid \mu_m \in  K^{-m+1}\otimes \bar{K} \; \forall \, m \text{ and } \left|\mu_2\right| < 1\} / \Symp_0(T^*\Sigma)$$
The infinitesimal variation around $\mu=(\mu_2,...,\mu_n)$ is then given by 
$$T_\mu\hat{\mathcal{T}}^n_{\Sigma} = \{(\delta\mu_2,...,\delta\mu_n) \mid \delta\mu_m \in  K^{-m+1}\otimes \bar{K} \;\forall \, m \} / \Ham_0(T^*\Sigma).$$

\noindent In the previous subsection, we have seen that every $n$-complex structure is locally trivializable. So there is an atlas in which $\mu:=(\mu_2,...,\mu_n)=0$.
In addition we have computed the action of a Hamiltonian vector field on the $n$-complex structure in proposition \ref{varmu}. Locally, we can decompose the Hamiltonian into homogeneous parts of degrees 1 to $n-1$. All higher terms do not affect the $n$-complex structure. By proposition \ref{varmu} (with $\mu_k=0$ for all $k$), we get 
$$T_\mu\hat{\mathcal{T}}^n_{\Sigma} = \{(\delta\mu_2,...,\delta\mu_n) \} / (\bar{\partial}w_1,...,\bar{\partial}w_{n-1})$$ where $w_m$ is a section of $K^{m}$. Thus, the tangent space splits into parts $$T_\mu\hat{\mathcal{T}}^n_{\Sigma} = \{\delta\mu_2 \in \bar{K}\otimes K^{-1}\} / \bar{\partial}w_1 \oplus ... \oplus \{\delta\mu_n \in K^{-n+1}\otimes \bar{K}\} / \bar{\partial}w_{n-1}$$
To compute the cotangent space, we use the pairing between differential of type $(1-k,1)$ and of type $(k,0)$ given by integration over the surface. We get 
\begin{align*}
(\{\delta\mu_m\} / \bar{\partial}w_{m-1})^* =& \; \{t_m \in K^m \mid \smallint t_m \bar{\partial}w_{m-1} = 0 \; \forall \, w_{m-1} \in K^{m-1} \} \\
=& \; \{t_m \in K^m \mid \smallint \bar{\partial}t_m w_{m-1} = 0 \; \forall \, w_{m-1} \in K^{m-1} \} \\
=& \; \{t_m \in K^m \mid \bar{\partial}t_m = 0 \} \\
=& \; H^0(\Sigma,K^m)
\end{align*}

\noindent Therefore $$T^*_\mu\hat{\mathcal{T}}^n_{\Sigma} = \bigoplus_{m=2}^{n}H^0(\Sigma,K^m).$$

\noindent Now, a standard computation using Riemann-Roch formula shows that (using genus $g\geq 2$)
$$\dim H^0(\Sigma,K^m) = (2m-1)(g-1).$$
Therefore $$\dim{\hat{\mathcal{T}}^n_{\Sigma} } = \dim{T^*_\mu\hat{\mathcal{T}}^n_{\Sigma} } = \sum_{m=2}^{n} \dim{H^0(K^m)} = \sum_{m=2}^{n} (2m-1)(g-1) = (n^2-1)(g-1).$$

\noindent The forgetful map is simply given by the following: to the equivalence class of a $n$-complex structure given by an ideal $I$, we associate the equivalence class of $I + \left\langle p,\bar{p} \right\rangle^{n-1}$. This is independent of coordinates since $\left\langle p,\bar{p} \right\rangle$ is the maximal ideal supported on the origin. In coordinates, the map just forgets $\mu_n$.
\end{proof}

\noindent Composing the forgetful maps, we get a map from $\hat{\mathcal{T}}^n_{\Sigma}$ to Teichm\"uller space. To any $n$-complex structure is therefore associated a complex structure. The cotangent space of geometric Hitchin space at $I$ is the Hitchin base of holomorphic differentials where holomorphicity is with respect to the associated complex structure of $I$.

Since the cotangent space is a complex vector space, we have automatically an almost complex structure on the geometric Hitchin space. Furthermore, since it is the moduli space of a geometric structure on the surface, the mapping class group acts properly discontinuously on it.

Let us compare geometric Hitchin space to Hitchin's component: both are contractible real manifolds of the same dimension. Hitchin's component has a symplectic structure (Goldman's symplectic structure on character varieties), but no obvious complex structure, nor forgetful maps. Geometric Hitchin space has (for the moment) no obvious symplectic structure, but enjoys all the properties we explained before. An isomorphism between both would enrich both sides.

\begin{conj}
Geometric Hitchin space $\hat{\mathcal{T}}^n_{\Sigma}$ is canonically isomorphic to Hitchin's component $\mathcal{T}^n_{\Sigma}$.
\end{conj}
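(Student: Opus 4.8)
The plan is to build a canonical map $\Phi \colon \hat{\mathcal{T}}^n_\Sigma \to \mathcal{T}^n_\Sigma$ from the spectral curve of a higher complex structure, and then to upgrade it to a diffeomorphism. The starting point is that the two spaces are already matched at the linear level: by the preceding theorem the cotangent space of $\hat{\mathcal{T}}^n_\Sigma$ at every point is the Hitchin base $\bigoplus_{m=2}^{n} H^0(\Sigma,K^m)$, which is exactly Hitchin's parametrization of $\mathcal{T}^n_\Sigma$ once a complex structure is fixed, while the composed forgetful map $\hat{\mathcal{T}}^n_\Sigma \to \hat{\mathcal{T}}^2_\Sigma = \mathcal{T}_\Sigma$ records precisely that complex structure. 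The whole difficulty is to promote this numerical coincidence into an actual, choice-free correspondence.

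First I would make the spectral curve precise. A compatible $n$-complex structure is pointwise the ideal $\langle p^n, -\bar p + \mu_2 p + \cdots + \mu_n p^{n-1}\rangle$ in the cotangent fiber; letting $z$ vary, this cuts out an $n$-sheeted ramified cover $S \subset T^{*\mathbb{C}}\Sigma$ of $\Sigma$, attached intrinsically to $I$ and therefore invariant under $\Symp_0(T^*\Sigma)$, so that $S$ descends to the quotient. The reality condition that $I(z)+\bar I(z)$ be the maximal ideal should be shown to equip $S$ with a real structure selecting the split real form: this is the step that must force the output into the Hitchin component, rather than into some other component of the $PSL(n,\mathbb{R})$ or $PSL(n,\mathbb{C})$ character variety.

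Next I would run the spectral correspondence. Solving the higher Beltrami equations of Proposition \ref{varmu} (the nonlinear elliptic system generalizing the measurable Riemann mapping theorem) trivializes $\mu_2$ and converts the $C^\infty$ data $(\mu_2,\dots,\mu_n)$ into holomorphic differentials $(q_2,\dots,q_n) \in \bigoplus_{m=2}^n H^0(\Sigma,K^m)$ relative to the associated complex structure. Feeding these into the Beauville--Narasimhan--Ramanan correspondence and Hitchin's equations produces a polystable Higgs bundle, hence via non-abelian Hodge a flat $PSL(n,\mathbb{R})$-connection whose characteristic coefficients are the $q_m$; by construction it lies on the Hitchin section and thus in $\mathcal{T}^n_\Sigma$. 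One then checks that $\Phi$ is smooth, mapping-class-group equivariant, and independent of the trivialization, which is what makes the isomorphism canonical. Since source and target are contractible manifolds of equal dimension, it then suffices to show that $\Phi$ is a local diffeomorphism and proper: the derivative should be identified on both cotangent spaces with the identity of $\bigoplus_{m=2}^n H^0(\Sigma,K^m)$ along the Fuchsian locus $\mu_3=\cdots=\mu_n=0$, giving openness there, and one propagates this along the fibrations given by the forgetful maps.

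The main obstacle is the analytic and global heart of the third step. Passing from a merely smooth higher complex structure to holomorphic spectral data demands existence, uniqueness and smooth dependence for the higher Beltrami system, for which no classical solvability result is available when $n>2$; and even granting this, proving that $\Phi$ is proper and surjects onto the \emph{entire} Hitchin component --- equivalently, that every higher complex structure ``higher-uniformizes'' to a discrete faithful representation --- is a genuine generalization of uniformization, with no elementary substitute for $n>2$. A sensible intermediate target is $n=3$, where the fibre of $\hat{\mathcal{T}}^3_\Sigma \to \mathcal{T}_\Sigma$ is the space of cubic differentials and the correspondence can be matched against the Labourie--Loftin description of convex $\mathbb{RP}^2$ structures via affine spheres, thereby confirming the conjecture in the first nonclassical case and guiding the general argument.
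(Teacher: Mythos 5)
You should first be aware that the statement you were asked to prove is stated in the paper as a \emph{conjecture}: the authors give no proof, only evidence --- contractibility, equality of dimensions, the identification $T^*_\mu\hat{\mathcal{T}}^n_\Sigma = \bigoplus_{m=2}^n H^0(\Sigma,K^m)$ with the Hitchin base, and the spectral curve of Section \ref{section5} --- together with a suggested attack (find a flat $PSL_n(\mathbb{R})$-connection on the canonical rank-$n$ bundle with fiber $\mathbb{C}[p,\bar p]/I(z)$). Your linear-level matching and your closing remarks reproduce this evidence accurately, and you are honest that the analytic heart is missing; but two of your intermediate steps are not merely hard, they are wrong or circular as stated.

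First, the spectral curve is \emph{not} attached to a higher complex structure alone: in the paper it is attached to a point of the cotangent bundle $T^*\hat{\mathcal{T}}^n$, i.e.\ it needs the data $(t_2,\dots,t_n)$ in addition to $(\mu_2,\dots,\mu_n)$. For a bare $n$-complex structure all $t_k=0$, so $P=p^n$ and the zero set of $(P,Q)$ is set-theoretically the zero-section $p=\bar p=0$ --- this is exactly what it means for $I(z)$ to lie in the zero-fiber $\Hilb^n_0$. There is no $n$-sheeted cover $S$, hence no line bundle $L$, no input for the Beauville--Narasimhan--Ramanan correspondence, and no real structure to examine; your map $\Phi$ never gets off the ground. (The paper itself notes that the curve collapses to $\Sigma$ as the $t$'s go to $0$.) Second, your step ``solving the higher Beltrami equations of Proposition \ref{varmu} converts $(\mu_2,\dots,\mu_n)$ into holomorphic differentials $(q_2,\dots,q_n)$'' has no content to appeal to: Proposition \ref{varmu} is only the infinitesimal variation formula under Hamiltonian flows, and Theorem \ref{loctrivial} shows that a higher complex structure has \emph{no local invariants at all}, so any such conversion is an irreducibly global statement. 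Given Hitchin's parametrization of $\mathcal{T}^n_\Sigma$ by $\bigoplus_{m\ge 2} H^0(K^m)$, producing such a canonical conversion is \emph{equivalent} to the conjecture, so invoking it is circular. For $n=2$ the analogue exists (the measurable Riemann mapping theorem); for $n>2$ nothing like it is known, as you yourself concede in your final paragraph --- which is to say your text is a reasonable research program (and the $n=3$ comparison with Labourie--Loftin is a sensible test case), but it is not a proof, and the paper offers none either.
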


\noindent One way to attack the conjecture is the following idea: To any $n$-complex structure, we can associate canonically a vector bundle of rank $n$ whose fiber over a point $z$ is $\mathbb{C}[p,\bar{p}]/I(z)$. The task is then to find a flat connection in this bundle with monodromy in $PSL_n(\mathbb{R})$. This amounts to a map from $\pi_1(\Sigma)$ to $PSL_n(\mathbb{R})$, so a point of the character variety.

In the last section we give an extra argument in favor of this conjecture: as for Hitchin's component, we get a spectral curve.

\section{A spectral curve}\label{section5}

In this final section, we explore and exploit the symplectic structure of the Hilbert scheme in order to construct a spectral curve associated to a cotangent vector to higher Hitchin space.

\subsection{Symplectic structure of punctual Hilbert scheme}
The punctual Hilbert scheme inherits a complex symplectic structure from the space of $n$ points $(\mathbb{C}^2)^n$. Denoting by $(x_i,y_i)$ the coordinates of the $i$-th point, the symplectic structure is simply $\omega=\sum_i dx_i\wedge dy_i$. We now show that this expression extents to the blow up (which gives the Hilbert scheme). We simply express $\omega$ in terms of coordinates $t$ and $\mu$.
For this, denote by $M_x$ and $M_y$ the multiplication operators by $x$ and $y$ respectively in $\mathbb{C}[x,y]/I$ where $I$ is a generic element of $\Hilb^n(\mathbb{C}^2)$: $$I=\left\langle x^n+t_1x^{n-1}+...+t_n, -y+\mu_1+\mu_2x+...+\mu_nx^{n-1} \right\rangle$$
Since $x^n+t_1x^{n-1}+...+t_n = \prod_i (x-x_i)$, we see that $M_x$ can be diagonalized to $\diag(x_1,...,x_n)$. Since $y=Q(x)$, we get $M_y=Q(M_x)$. So its diagonalized form is $\diag(Q(x_1),...,Q(x_n)) = \diag(y_1,...,y_n)$ since $Q$ is the interpolation polynomial. Since the trace is unchanged by conjugation, we get
$$\omega = \tr \diag(dx_1,...,dx_n)\wedge\diag(dy_1,...,dy_n) = \tr dM_x\wedge dM_y$$
In the basis $(1,x,x^2,...,x^{n-1})$, $M_x$ is a companion matrix, so $dM_x$ has only non-zero elements in the last column. Denote by $\alpha_{i,j}$ the matrix elements of $M_y$ in this basis. These can be expressed in terms of $\mu$ and $t$. Then we have $$\omega = \sum_i dt_i\wedge d\alpha_{n,n+1-i}.$$ 
This gives the symplectic structure on $\Hilb^n(\mathbb{C}^2)$.

\begin{Remark}
The symplectic structure on the Hilbert scheme can also be described in terms of the Poisson bracket between the coordinates. We have $\{t_i,t_j\} = 0 = \{\mu_i,\mu_j\}$ and $\{\mu_i,t_j\} = t_{j-i}$ with $t_0=1$ and $t_i=0$ for $i<0$.
\end{Remark}

Now we come back to the reduced Hilbert scheme and show that the zero-fiber $\Hilb^n_0(\mathbb{C}^2)$ is a Lagrangian subspace.
Consider the action of $\mathbb{C}$ on $\mathbb{C}^2$ given by translation in the $y$ direction. This action induces the action on $(\mathbb{C}^2)^n$, which action is Hamiltonian with moment map $H(x,y)=x_1+\cdots+x_n$. Let us do the Hamiltonian reduction (Marsden-Weinstein quotient): first we restrict to $H^{-1}(\{0\})$ which corresponds to $t_1=0$ since by Vieta's correspondence $t_1=-x_1+\cdots-x_n=-H(x,y)$. Then we have to quotient out the action which means that we have to identify $\{y_i\}$ with $\{y_i+t\}$ for all $i$, i.e. modulo the vertical shift in $\mathbb{C}^2$. Geometrically, we want the barycenter of the $n$ given points being zero. With this $\mu_1$ becomes a function of the other $\mu$'s and the $t$'s.

\begin{definition}
We define the reduced punctual Hilbert scheme of the plane, denoted by $\Hilb^n_{red}(\mathbb{C}^2)$, to be the Hamiltonian reduction $\Hilb^n(\mathbb{C}^2)//_0 \mathbb{C}$.
\end{definition}

\noindent Since the reduced Hilbert scheme is a Hamiltonian reduction of a symplectic space, it is itself symplectic. Its symplectic structure is simply the one from $\Hilb^n(\mathbb{C}^2)$ with $t_1=0$ ($\mu_1$ only appears together with $t_1$ so also disappears).

\begin{prop}\label{lagr}
The zero-fiber $\Hilb^n_0(\mathbb{C}^2)$ is Lagrangian in $\Hilb^n_{red}(\mathbb{C}^2)$.
\end{prop}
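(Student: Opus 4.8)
The plan is to verify the two defining properties of a Lagrangian subvariety separately: that the zero-fiber is isotropic (the symplectic form restricts to zero on it) and that it has exactly half the dimension of the ambient space. The dimension count is immediate from facts already recorded above: the zero-fiber $\Hilb^n_0(\mathbb{C}^2)$ is irreducible of dimension $n-1$, while $\Hilb^n_{red}(\mathbb{C}^2)$ has dimension $2n-2$, so $\dim \Hilb^n_0 = \frac{1}{2} \dim \Hilb^n_{red}$. It therefore suffices to prove isotropy.

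For isotropy I would exploit the explicit description $\omega = \sum_i dt_i \wedge d\alpha_{n,n+1-i}$ of the symplectic form established above, which on the reduced scheme reads $\omega = \sum_{i \geq 2} dt_i \wedge d\alpha_{n,n+1-i}$ once $t_1 = 0$ is imposed. The key observation is that the zero-fiber lies inside the common zero locus of the functions $t_1, \dots, t_n$: an ideal $I$ supported at the origin makes $\mathbb{C}[x,y]/I$ a local Artinian ring in which the class of $x$ is nilpotent, so the multiplication operator $M_x$ is nilpotent and its characteristic polynomial $X^n + t_1 X^{n-1} + \cdots + t_n$ equals $X^n$, forcing every $t_i$ to vanish. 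Consequently each $t_i$ is constant (equal to zero) along the zero-fiber, so $dt_i$ annihilates every vector tangent to $\Hilb^n_0$. Since every term of $\omega$ carries a factor $dt_i$, the pullback of $\omega$ to the zero-fiber vanishes, which is exactly isotropy.

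The one point requiring care is that the coordinate formula for $\omega$, together with the matrix entries $\alpha_{n,n+1-i}$, is valid only on the generic locus where $(1, x, \dots, x^{n-1})$ is a basis of $\mathbb{C}[x,y]/I$, that is, on the dense open subset of $\Hilb^n_0$ consisting of ideals of the Lagrange-interpolation form $\langle x^n, -y + \mu_2 x + \cdots + \mu_n x^{n-1}\rangle$. On this dense open subset the computation above gives $\omega|_{\Hilb^n_0} = 0$. The symplectic form is however a global algebraic $2$-form on the smooth variety $\Hilb^n_{red}(\mathbb{C}^2)$, so its restriction to the zero-fiber is continuous, and vanishing on a dense subset forces vanishing on the entire smooth locus of $\Hilb^n_0$. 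This is precisely where the non-generic points, such as $\langle x^2, xy, y^2 \rangle \in \Hilb^3_0$, are absorbed, and I expect this density-and-continuity step to be the only genuinely delicate part of the argument, the remainder being the dimension count and the trivial remark that each term of $\omega$ contains a $dt_i$. Combining half-dimensionality with isotropy yields that $\Hilb^n_0(\mathbb{C}^2)$ is Lagrangian in $\Hilb^n_{red}(\mathbb{C}^2)$.

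Alternatively, one can reach isotropy through the Poisson description in the Remark above: along the zero-fiber all $t_k$ vanish, so the brackets collapse to $\{\mu_i, t_j\} = t_{j-i} = \delta_{ij}$ and $\{\mu_i, \mu_j\} = \{t_i, t_j\} = 0$, i.e. $(\mu_i, t_i)$ are Darboux-conjugate pairs at points of the zero-fiber. The zero-fiber being cut out by $t_i = 0$ is then manifestly Lagrangian for the standard form $\sum d\mu_i \wedge dt_i$. This route requires inverting the Poisson bivector, so I would favour the direct $dt_i$-factor argument, keeping the Darboux computation as a sanity check.
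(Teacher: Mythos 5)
Your proof is correct and follows essentially the same route as the paper: the symplectic form $\omega = \sum_i dt_i \wedge d\alpha_{n,n+1-i}$ vanishes on the zero-fiber because all $t_i$ vanish there, and the dimension count $n-1 = \frac{1}{2}(2n-2)$ finishes the argument. You additionally justify two points the paper leaves implicit --- why $t_i = 0$ on the zero-fiber (nilpotency of $M_x$) and how to handle the non-generic ideals via density of the Lagrange-interpolation locus and continuity of $\omega$ --- which is a welcome tightening rather than a different approach.
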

\begin{proof}
Since for the zero-fiber, all $t_i=0$, we see that $\omega= \sum_i dt_i\wedge d\alpha_{n,n+1-i}$ vanishes on it. Since its dimension is $n-1$ which is half of the dimension of the reduced Hilbert scheme, we are done.
\end{proof}

\noindent As a corollary, we get that the cotangent space to the zero-fiber $\Hilb^n_0(\mathbb{C}^2)$ is given by its normal bundle inside the reduced Hilbert scheme. At first order, i.e. modulo $t^2$ (terms which are at least quadratic in the $t_i$'s), this is the whole space $\Hilb^n_{red}(\mathbb{C}^2)$.

\subsection{Cotangent space to geometric Hitchin space}
We already described the cotangent space at one point $I$ to geometric Hitchin space. Here we describe the total cotangent space $T^*\hat{\mathcal{T}}^n$.

Analyzing the global nature of the matrix elements $\alpha_{i,j}$ of $M_y$, one can easily show that $t_i(z)\alpha_{n,n+1-i}(z)$ is of type (1,1), so can be readily integrated over $\Sigma$. Thus, the symplectic structure of the Hilbert scheme extends naturally to a symplectic structure of the Hilbert scheme bundle $\Hilb^n(T^{*\mathbb{C}}\Sigma)$ given by $$\omega = \int_{\Sigma} \sum_i dt_i(z)\wedge d\alpha_{n,n+1-i}(z).$$

\noindent By the last paragraph of the previous subsection, we see that near the zero-section, the total cotangent space $T^*\hat{\mathcal{T}}^n$ is nothing but a section of the reduced Hilbert scheme bundle, so an ideal of the form $$\left\langle p^n+t_2(z)p^{n-2}+...+t_n(z),-\bar{p}+\mu_1(z)+\mu_2(z)p+...+\mu_n(z)p^{n-1} \right\rangle.$$
Of course, we have to quotient out the action of the higher diffeomorphisms.

We already computed the variation of $\mu_k$ under a Hamiltonian in proposition \ref{varmu}. Imitating the same computation as for the cotangent space at one point, but now around an arbitrary $n$-complex structure $(\mu_2,...,\mu_n)$, we get the following:

\begin{thm}
The cotangent bundle of the geometric Hitchin space is given by 
\begin{align*}
T^*\hat{\mathcal{T}}^n=\{& (\mu_2, ..., \mu_n, t_2,...,t_n) \mid  \mu_k \in \Gamma(K^{1-k}\otimes \bar{K}), t_k \in \Gamma(K^k) \text{ and } \; \forall k\\
& (-\bar{\partial}\!+\!\mu_2\partial\!+\!k\partial\mu_2)t_{k}+\sum_{l=1}^{n-k}((l\!+\!k)\partial\mu_{l+2}+(l\!+\!1)\mu_{l+2}\partial)t_{k+l}=0 \}
\end{align*}
\end{thm}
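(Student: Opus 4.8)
The plan is to imitate, around an arbitrary $n$-complex structure $\mu=(\mu_2,\dots,\mu_n)$, the cotangent-space computation already carried out at $\mu=0$ in the proof of the main theorem, but now feeding in the full variation formula of Proposition \ref{varmu} rather than its specialization $\delta\mu_{k+1}=\bar\partial w_k$.

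First I would fix the coordinates. By the previous subsection, near the zero-section a point of $T^*\hat{\mathcal{T}}^n$ is a section of the reduced Hilbert scheme bundle, i.e. an ideal $\langle p^n+t_2p^{n-2}+\dots+t_n,\,-\bar p+\mu_1+\mu_2 p+\dots+\mu_n p^{n-1}\rangle$ with $\mu_k\in\Gamma(K^{1-k}\otimes\bar K)$ and $t_k\in\Gamma(K^k)$. By Proposition \ref{lagr} the zero-fiber is Lagrangian, so the $t_k$ are precisely the normal (i.e. cotangent) directions, and the symplectic form $\omega=\int_\Sigma\sum_i dt_i\wedge d\alpha_{n,n+1-i}$ restricts, to first order in $t$, to the pairing $\langle t,\delta\mu\rangle=\sum_k\int_\Sigma t_k\,\delta\mu_k$ between the fibre coordinates $t$ and the base variations $\delta\mu$ (the same pairing of types $(k,0)$ against $(1-k,1)$ used at $\mu=0$). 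Since $\hat{\mathcal{T}}^n=\{\mu\}/\Symp_0(T^*\Sigma)$, the cotangent space at $[\mu]$ is the annihilator, inside the space of $t$'s, of the tangent directions to the gauge orbit; and because Hamiltonians of order $\ge n$ act trivially, it suffices to annihilate the variations produced by a general Hamiltonian $H=\sum_{k=1}^{n-1}w_k p^k$. Thus the defining condition is $\sum_l\int_\Sigma t_l\,\delta\mu_l=0$ for all choices of $w_1,\dots,w_{n-1}$.

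Next I would substitute the variation formula. Writing out $\delta\mu_l$ from Proposition \ref{varmu} and grouping the integrand by the free functions $w_k$, the coefficient of each $w_k$ must vanish separately. Each such coefficient pairs $w_k$ with $t_{k+1}$ through the operator $\bar\partial-\mu_2\partial+k\partial\mu_2$ (the $l=k+1$ branch), plus $w_k$ with the higher $t_{k+l'+1}$ through the operators $k\partial\mu_{l'+2}-(l'+1)\mu_{l'+2}\partial$ (the $l>k+1$ branch). Integrating by parts over the closed surface $\Sigma$ (all boundary terms vanish) transfers these operators onto the $t$'s via their formal adjoints, $\bar\partial\mapsto-\bar\partial$ and $\mu_m\partial\mapsto-(\partial\mu_m+\mu_m\partial)$. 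Demanding that the coefficient of $w_k$ vanish for every $w_k$ then yields, after the index shift $j=k+1$, exactly the stated relation
\[
(-\bar\partial+\mu_2\partial+j\partial\mu_2)t_j+\sum_{l=1}^{n-j}\bigl((l+j)\partial\mu_{l+2}+(l+1)\mu_{l+2}\partial\bigr)t_{j+l}=0 ,
\]
and as $k$ runs over $1,\dots,n-1$ the index $j$ runs over $2,\dots,n$, producing all the constraints in the theorem.

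The routine but delicate part, where I would concentrate the care, is the coefficient bookkeeping in the integration by parts. The key reconciliations are that $\int_\Sigma t_{k+1}\mu_2\partial w_k$ produces, besides $\mu_2\partial t_{k+1}$, an extra term $(\partial\mu_2)t_{k+1}$ which combines with the explicit $k\partial\mu_2$ to give the coefficient $(k+1)=j$ in the leading operator; and likewise in the higher branch the integration-by-parts term from $(l'+1)\mu_{l'+2}\partial w_k$ contributes $(l'+1)(\partial\mu_{l'+2})t_{k+l'+1}$, which adds to the explicit $k\partial\mu_{l'+2}$ to give the combined multiplicative coefficient $k+l'+1=j+l$. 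The one genuine conceptual point to justify cleanly (rather than the arithmetic) is the identification of the pairing: that the symplectic form of the Hilbert scheme bundle restricts on the normal bundle of the Lagrangian zero-fiber to the naive integral pairing $\sum_k\int_\Sigma t_k\,\delta\mu_k$, so that annihilating the gauge orbit is literally the system above. Everything else is a direct transcription of the $\mu=0$ argument already used for the main theorem.
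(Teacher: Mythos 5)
Your proposal is correct and follows essentially the same route as the paper's own proof in the appendix: pair $t_k\in\Gamma(K^k)$ against the gauge variations $\delta\mu_l$ of Proposition \ref{varmu} via $\int_\Sigma\sum_k t_k\,\delta\mu_k$, integrate by parts on the closed surface, and demand that the coefficient of each free $w_k$ vanish, with exactly the coefficient bookkeeping ($k\partial\mu_2+\partial\mu_2=(k+1)\partial\mu_2=j\partial\mu_2$, and $k+l'+1=j+l$) that you carry out. Your flagged conceptual point --- that the Hilbert-scheme-bundle symplectic form restricts on the normal bundle of the Lagrangian zero-fiber to the naive integral pairing --- is likewise taken for granted in the paper (``As for the case $\mu=0$ we have\dots''), so your argument matches the paper's level of rigor.
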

\noindent Notice that for $\mu=0$, we get $\bar{\partial}t_k=0$, i.e. $t_k \in H^0(K^k)$ a holomorphic differential as previously computed. Some details can be found in the appendix \ref{appendix2}.

\subsection{Spectral curve}
In this subsection, we construct a ramified cover $\tilde{\Sigma}$ over the surface $\Sigma$ inside its complexified cotangent bundle $T^{*\mathbb{C}}\Sigma$ associated to a point of the cotangent bundle of the $n$-complex structures $T^*\hat{\mathcal{T}}^n$.

Define polynomials $P(p)=p^n+t_2p^{n-2}+...+t_n$ and $Q(p, \bar{p})=-\bar{p}+\mu_1+\mu_2 p+...+\mu_n p^{n-1}$ where $\mu_1$ is an explicit function of the other variables given by the reduced Hilbert scheme. Put $I= \left\langle P(p), Q(p, \bar{p}) \right\rangle$. Define $\tilde{\Sigma} \subset T^{*\mathbb{C}}\Sigma$ by the zero set of $P$ and $Q$. This curve $\tilde{\Sigma}$ is called \textbf{spectral curve}. This is a ramified covering space with $n$ sheets.

\begin{prop}\label{spectralcurve}
We have $\{ P,Q\} = 0 \mod I \mod t^2$ iff $I \in T^*\hat{\mathcal{T}}^n$.
\end{prop}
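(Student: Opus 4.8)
The plan is to verify the statement by a direct computation of the Poisson bracket $\{P,Q\}$ on $T^{*\mathbb{C}}\Sigma$ and to match the result, after reduction, with the system of equations defining $T^*\hat{\mathcal{T}}^n$ in the preceding theorem. First I would fix the Poisson bracket convention consistent with the computation behind Proposition \ref{varmu}, namely $\{f,g\} = \partial_p f\,\partial_z g - \partial_z f\,\partial_p g + \partial_{\bar{p}}f\,\partial_{\bar{z}}g - \partial_{\bar{z}}f\,\partial_{\bar{p}}g$. Because $P$ depends on $p,z,\bar{z}$ only (so $\partial_{\bar{p}}P=0$) and $Q$ contains $\bar{p}$ linearly (so $\partial_{\bar{p}}Q=-1$), the bracket collapses to the three-term expression
$$\{P,Q\} = \partial_p P\,\partial_z Q - \partial_z P\,\partial_p Q + \partial_{\bar{z}}P,$$
which is a polynomial in $p$ alone, of degree up to $2n-2$. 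In particular, since no $\bar{p}$ survives, reduction modulo $I$ is just reduction modulo $P$; this is the object whose reduction I must control.

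Next I would record the two reductions. Since $P = p^n + \sum_{i\ge 2} t_i p^{n-i}$, working modulo $I$ amounts to the single substitution $p^n \equiv -\sum_{i\ge 2} t_i p^{n-i}$, and because each $t_i$ is a cotangent (first-order) variable while the $\mu_j$ are the base point, "modulo $t^2$" means this substitution is applied at most once to any monomial of degree $\ge n$, every further reduction being quadratic in $t$ and hence discarded. A crucial preliminary is that the reduced Hilbert scheme of Proposition \ref{lagr} forces $\mu_1$ to be a function of the remaining coordinates vanishing to first order in $t$; concretely the barycenter condition gives $n\mu_1 \equiv \sum_{j\ge 3}(j-1)\mu_j t_{j-1} \pmod{t^2}$. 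Granting this, I would expand the three terms above, collect the coefficient of each power $p^{n-k}$ after the single reduction, and keep only the part linear in $t$.

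The heart of the matter is the coefficient bookkeeping. Sorting the linear-in-$t$ contributions to the coefficient of $p^{n-k}$ into those coming from $\partial_{\bar{z}}P$, from $-\partial_z P\,\partial_p Q$, from the first-order part of $\partial_p P\,\partial_z Q$, and from the order-zero part $n p^{n-1}\partial_z Q$ after reduction by $P$, I expect the $t_k$-terms to assemble into $-(-\bar\partial + \mu_2\partial + k\partial\mu_2)t_k$ and the remaining terms into $-\sum_{l=1}^{n-k}\big((l+k)\partial\mu_{l+2} + (l+1)\mu_{l+2}\partial\big)t_{k+l}$, so that the vanishing of the coefficient of $p^{n-k}$ is exactly the $k$-th equation of the theorem, for $k=2,\dots,n$. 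The step I expect to be the main obstacle is twofold: first, correctly tracking the contributions of the high-degree monomials $p^{n+j-2}$ in $n p^{n-1}\partial_z Q$ after reducing by $P$, since these feed back into every lower coefficient and supply the precise constants $(l+k)$ and $k$; and second, checking that the leftover coefficient of $p^{n-1}$ vanishes identically. The latter is not automatic: it equals $n\partial\mu_1 - \partial\big(\sum_{j\ge 3}(j-1)\mu_j t_{j-1}\big)$, which is zero precisely by the expression for $\mu_1$ coming from the reduced Hilbert scheme. This cancellation is the consistency condition that makes the equivalence hold with exactly the $n-1$ equations indexed by $k=2,\dots,n$, and verifying it cleanly is where I would spend the most care.
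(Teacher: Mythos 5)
Your proposal is correct and follows essentially the same route as the paper: a direct computation of $\{P,Q\}$, expanded in powers of $p$ and reduced modulo $P$ to first order in $t$, with the coefficients of $p^{n-k}$ for $k=2,\dots,n$ matching the defining equations of $T^*\hat{\mathcal{T}}^n$ and the $p^{n-1}$ coefficient killed by the special value of $\mu_1$ (the paper organizes the identical computation by splitting the bracket into three sub-brackets and citing Proposition \ref{varmu} rather than redoing the bookkeeping). Your explicit barycenter formula $n\mu_1 \equiv \sum_{j\ge 3}(j-1)\mu_j t_{j-1} \pmod{t^2}$ and the resulting identification of the $p^{n-1}$ coefficient with $\partial\bigl(n\mu_1 - \sum_{j\ge 3}(j-1)\mu_j t_{j-1}\bigr)$ are correct and make precise a cancellation the paper only asserts.
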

\noindent The proposition means that the spectral curve is Lagrangian "near the zero-section" iff the ideal comes from the cotangent space of the $n$-complex structures. Concretely this means that the $t_k$ satisfy the condition  appearing in the description of the total cotangent space $T^*\hat{\mathcal{T}}^n$.

The proof is a direct computation. Expanding the result in $p$ gives the conditions of the previous theorem as coefficients. The coefficient of $p^{n-1}$ vanishes because of the special value of $\mu_1$. For details see the appendix \ref{appendix3}.

To a cotangent vector of a $n$-complex structure is associated a spectral curve. Notice that the spectral curve here is independent of a complex structure on the surface $\Sigma$, but it lies in the complexified cotangent bundle. Hitchin's spectral curve depends on the complex structure and lies in the holomorphic cotangent bundle (so it is trivially Lagrangian as a one-dimensional subspace of a two-dimensional symplectic space). For $\mu_k=0$ for all $k$, our spectral curve $\tilde{\Sigma}$ lies in the holomorphic cotangent bundle and can be identified with Hitchin's spectral curve (with complex structure $\mu_2=0$ on $\Sigma$). So $\tilde{\Sigma}$ can be seen as Hitchin's spectral curve deformed into the $\bar{p}$-direction of $T^{*\mathbb{C}}\Sigma$ by the $n$-complex structure.

On the spectral curve $\tilde{\Sigma}$, there is a line bundle $L$ with fiber the eigenspace of $M_p$, the multiplication operator by $p$ in $\mathbb{C}[p,\bar{p}]/I$, since the characteristic polynomial of $M_p$ is given by $P$. The pushforward of $L$ to $\Sigma$ by the covering map gives the rank $n$ bundle with fiber $\mathbb{C}[p,\bar{p}]/I$. So we get a similar spectral data as for Hitchin's spectral curve.

Since $\tilde{\Sigma}$ is Lagrangian to order 1, we can compute its periods. The ratios of periods at the limit when all $t$'s go to 0 (so $\tilde{\Sigma}$ collapses to $\Sigma$) should give coordinates on geometric Hitchin space $\hat{\mathcal{T}}^n\Sigma$. 


\section{Appendix}
\label{section6}

In this appendix, we give details for proofs and computations.

\subsection{Proof of proposition \ref{genericideal}} \label{appendix0}

\begin{fakeprop}
For a $n$-complex structure $I$, we can write at a point $z$ either $I(z)$ or its conjugate $\bar{I}(z)$ as $$\left\langle p^n, -\bar{p}+\mu_2(z,\bar{z})p+...+\mu_n(z,\bar{z})p^{n-1}  \right\rangle  \text{ with } \mu_2\bar{\mu}_2<1.$$ 
\end{fakeprop}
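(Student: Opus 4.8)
The plan is to work inside the local Artinian ring $A = \mathbb{C}[p,\bar{p}]/I$ and read off the statement from the linear parts of $I$ and $\bar{I}$. Since $I$ lies in the zero-fiber, its support is the origin, so $\sqrt{I} = \mathfrak{m}_0 := \langle p,\bar{p}\rangle$ and hence $I \subseteq \mathfrak{m}_0$; thus $A$ is local of length $n$ with maximal ideal $\mathfrak{m} = \mathfrak{m}_0/I$. The key object is the two-dimensional space $W = \mathfrak{m}_0/\mathfrak{m}_0^2$, on which complex conjugation acts anti-linearly by interchanging $p$ and $\bar{p}$. Writing $L \subseteq W$ for the image of $I$ (the ``linear parts'' of elements of $I$), one has the canonical identification $\mathfrak{m}/\mathfrak{m}^2 \cong W/L$, so that $\dim_{\mathbb{C}} \mathfrak{m}/\mathfrak{m}^2 = 2 - \dim L$.

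First I would translate the compatibility condition. Reducing $I + \bar{I} = \mathfrak{m}_0$ modulo $\mathfrak{m}_0^2$ gives $L + \bar{L} = W$, where $\bar{L}$ is the conjugate of $L$ (it is the image of $\bar{I}$, since conjugation is an anti-linear ring automorphism carrying $I$ to $\bar{I}$). As conjugation is a real-linear isomorphism, $\dim \bar{L} = \dim L$, so $L + \bar{L} = W$ forces $\dim L \geq 1$, hence $\dim \mathfrak{m}/\mathfrak{m}^2 \leq 1$. By Nakayama this means $\mathfrak{m}$ is principal and $A$ is curvilinear, $A \cong \mathbb{C}[t]/(t^n)$. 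For $n \geq 2$ the case $\dim L = 2$ is excluded, as it would force $A = \mathbb{C}$; so $L$ is a single line, necessarily distinct from $\bar{L}$ because two equal lines cannot span $W$.

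Next I would identify the cyclic generator. A linear form generates $A$ precisely when its class lies outside $L$, and since $L$ is one-dimensional it cannot contain both $p$ and $\bar{p}$, so at least one of them generates $A$. If $p$ generates, then $\bar{p}$ is a polynomial in $p$ with vanishing constant term (as $\bar{p} \in \mathfrak{m}$), which after comparing codimensions gives $I = \langle p^n, -\bar{p} + \mu_2 p + \cdots + \mu_n p^{n-1}\rangle$; if instead $\bar{p}$ generates, the same argument applied to $\bar{I}$ (whose ring is the conjugate of $A$) puts $\bar{I}$ in this form. In either case one of $I, \bar{I}$ has the claimed shape, and reading off the linear part shows that $L$ is spanned by $(\mu_2, -1)$ while $\bar{L}$ is spanned by $(-1, \bar{\mu}_2)$; the requirement $L \neq \bar{L}$ is exactly $\mu_2\bar{\mu}_2 \neq 1$.

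Finally, to fix the direction of the inequality, I would note that $\mu_2\bar{\mu}_2 > 1$ forces $\mu_2 \neq 0$, so $\bar{p}$ also lies outside $L$ and $\bar{I}$ is simultaneously in standard form; comparing the two linear lines then yields its leading coefficient $\nu_2 = 1/\bar{\mu}_2$, whence $\nu_2\bar{\nu}_2 = 1/(\mu_2\bar{\mu}_2) < 1$, and one simply keeps $\bar{I}$ in that case. The one genuinely delicate point is the opening reduction: showing that the compatibility condition eliminates the non-curvilinear ideals (such as $\langle p^2, p\bar{p}, \bar{p}^2\rangle$ for $n = 3$) by forcing $\dim \mathfrak{m}/\mathfrak{m}^2 \leq 1$. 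Everything after that is bookkeeping on a single line in $W$.
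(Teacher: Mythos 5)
Your proof is correct and takes essentially the same route as the paper's: the paper's space $I_1$ of ``degree~1 parts'' is exactly your line $L\subseteq\mathfrak{m}_0/\mathfrak{m}_0^2$, and both arguments use compatibility plus the codimension to show $\dim L=1$, derive the normal form by expressing $\bar p$ as a polynomial in $p$ modulo $I$ (the paper by iterated substitution, you via Nakayama and curvilinearity of $A\cong\mathbb{C}[t]/(t^n)$ --- the same mechanism stated invariantly), and read off $\mu_2\bar\mu_2\neq 1$ from the lines $L\neq\bar L$. Your write-up merely makes explicit a few steps the paper leaves implicit, namely $p^n\in I$ from $\mathfrak{m}^n=0$, the final codimension comparison, and the relation $\nu_2=1/\bar\mu_2$ that fixes the direction of the inequality.
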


\begin{proof}
The proposition concerns a cotangent fiber of one point $z$. So we can really work on $\mathbb{C}^2$ with coordinates $(p,\bar{p})$. 
Let $I_1$ be the set of all degree 1 polynomials which appear as elements of $I$. It is clear that $I_1$ is a vector subspace of $\mathbb{C}^2$ since $I$ is a vector space. We will show that $I_1$ is of dimension 1. 

\noindent If $I_1=\{0\}$, then so is $\bar{I}_1=\{0\}$. But by $I\oplus \bar{I} = \left\langle p,\bar{p} \right\rangle$, we get $I_1\oplus\bar{I}_1 = \mathbb{C}^2$ which is absurd. 
If $I_1=\mathbb{C}^2$ then $I=\left\langle p,\bar{p}\right\rangle$ which contradicts the fact that it is of codimension $n\geq 2$. 

\noindent Therefore $I_1=\Vect(ap + b\bar{p})$ is of dimension 1. So $\bar{I}_1=\Vect(\bar{a} \bar{p} + \bar{b} p)$ and the condition $I\oplus \bar{I} = \left\langle p,\bar{p} \right\rangle$ is equivalent to 
$a\bar{a}\neq b\bar{b}$. Assume $a\bar{a}< b\bar{b}$ (the other case being similar and leads to $\bar{I}$ instead of $I$), then $I_1=\Vect(-\bar{p}+\mu_2p)$ with $\left|\mu_2\right|=\left|a/b\right|<1$.

\noindent Finally, since $-\bar{p}+\mu_2p \in I_1$, there is a relation of the form $\bar{p}=\mu_2p+\text{higher terms}$ in $I$. Iterating this equality by replacing it in any $\bar{p}$ appearing in the higher terms, we will get an expression of $\bar{p}$ in terms of monomials in $p$. 
Since $p^n=0$ in $I$, we get $$\bar{p}=\mu_2p+\mu_3p^2+...+\mu_np^{n-1} \mod I.$$
To give an example, we get for $n=4$ and $\bar{p}=ap+bp\bar{p}$ that $$\bar{p}=ap+bp(ap+b(ap+b\bar{p}))=ap+abp^2+ab^2p^3.$$ \end{proof}

\subsection{Local Triviality}\label{appendix1}
Here we give the proof of theorem \ref{loctrivial}:
\begin{fakethm}
The $n$-complex structure can be locally trivialized, i.e. there is a higher diffeomorphism which sends the structure to $(\mu_2(z,\bar{z}),...,\mu_n(z,\bar{z}))=(0,...,0)$ for all small $z$.
\end{fakethm}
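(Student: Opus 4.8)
The plan is to run a Moser-type (continuity) argument, reducing the existence of a finite trivializing higher diffeomorphism to the \emph{local surjectivity} of the infinitesimal action computed in Proposition \ref{varmu}. Consider the linear interpolation $\mu^s = (1-s)\mu = ((1-s)\mu_2,\ldots,(1-s)\mu_n)$ for $s\in[0,1]$, so that $\mu^0=\mu$ is the given structure, $\mu^1=0$ is the trivial one, and $|\mu_2^s| = (1-s)|\mu_2| < 1$ throughout; thus every $\mu^s$ is again compatible. I would seek a family of higher diffeomorphisms $\phi_s \in \Symp_0(T^*\Sigma)$, defined on a small disc around $0$, with $\phi_0 = \id$ and $\phi_s\cdot\mu = \mu^s$. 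Differentiating in $s$ turns this into the requirement that at each time the higher vector field $X_s \in \Ham_0(T^*\Sigma)$ generating $\phi_s$ realize the prescribed variation $\tfrac{d}{ds}\mu^s = -\mu$ through the formulas of Proposition \ref{varmu}, now evaluated at $\mu^s$; the trivializing diffeomorphism is then $\phi_1$, obtained by integrating the time-dependent field $X_s$.

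The heart of the matter is therefore to show, for each fixed compatible $\mu$, that the map
$$(w_1,\ldots,w_{n-1}) \longmapsto (\delta\mu_2,\ldots,\delta\mu_n)$$
of Proposition \ref{varmu} is locally surjective, where $w_k$ is the coefficient of $p^k$ in the Hamiltonian taken modulo $I$. The formulas display a crucial triangular structure: $\delta\mu_l$ involves only $w_1,\ldots,w_{l-1}$, and $w_{l-1}$ enters through the single leading operator $D_{l-1} := \bar\partial - \mu_2\partial + (l-1)\partial\mu_2$, all lower $w_k$ (with $k<l-1$) contributing only through zeroth- and first-order terms whose coefficients are built from the higher Beltrami differentials. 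Hence, to hit a prescribed target $(g_2,\ldots,g_n)$ I would solve recursively: first $D_1 w_1 = g_2$, then $D_2 w_2 = g_3 - (\text{terms in the already-determined } w_1)$, and so on up to $D_{n-1}w_{n-1} = g_n - (\cdots)$. Each step is a single scalar equation of the form $D_c w = (\text{known})$.

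The main obstacle is the local solvability of the operators $D_c = \bar\partial - \mu_2\partial + c\,\partial\mu_2$. Their principal part $\bar\partial - \mu_2\partial$ is the Beltrami operator, which is elliptic precisely because $|\mu_2| < 1$; after passing to a local complex coordinate $w$ integrating $\mu_2$ (Korn--Lichtenstein), it becomes $\bar\partial_w$ up to a nonvanishing factor, and $D_c$ becomes $\bar\partial_w$ plus a zeroth-order term. Local solvability of such a first-order elliptic equation on a small disc is classical: the Cauchy--Pompeiu solution of $\bar\partial$ combined with a Neumann series absorbing the zeroth-order term on a sufficiently small disc (Vekua's theory of generalized analytic functions). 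This is where the genuine analytic work lies; the ellipticity and the triangular recursion reduce everything to this single local existence statement, applied $n-1$ times.

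Two points remain. First, the $w_k$ above are the reductions modulo $I$ of an honest Hamiltonian, and one must check that an arbitrary such tuple is realized by a \emph{real} Hamiltonian $H$ with $w_{l,k} = \overline{w_{k,l}}$. Writing $H = \sum_k (w_k p^k + \overline{w_k}\,\bar p^k)$ and reducing modulo $I$ via $\bar p \equiv \mu_2 p + \cdots + \mu_n p^{n-1}$ yields an $\mathbb{R}$-linear, block-lower-triangular correspondence between the coefficients, whose diagonal blocks $w \mapsto w + \mu_2^k\bar w$ have determinant $1 - |\mu_2|^{2k} > 0$ exactly by the compatibility condition $|\mu_2| < 1$; hence the correspondence is invertible and every target is attained. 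Second, for the integration step I would choose the solutions $w^s_k$ to depend smoothly on $s$ and shrink the disc so that the flow stays in the chart for all $s\in[0,1]$ (standard in Darboux/Moser arguments); then the flow $\phi_s$ exists and $\phi:=\phi_1$ sends $\mu$ to $0$, establishing local triviality.
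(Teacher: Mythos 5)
Your proposal is correct, but it takes a genuinely different route from the paper's. The paper argues by induction on the order: the base case $n=2$ is the Gauss/Korn--Lichtenstein theorem, and at the inductive step the lower differentials are already trivialized, $\mu_2=\cdots=\mu_n=0$, so that a homogeneous Hamiltonian $H=w_n p^n+\bar w_n \bar p^n$ leaves $\mu_2,\ldots,\mu_n$ untouched (the same triangularity of Proposition \ref{varmu} you exploit, but used at the group level, one degree at a time) and changes $\mu_{n+1}$ by $\bar\partial w_n$ only; the single equation $\bar\partial w_n=-\mu_{n+1}$ is solved by the Cauchy transform $T$, a bump function in $(p,\bar p)$ makes the vector field complete, and since the variation is time-independent the flow gives explicitly $\mu_{n+1}^t=(1-t)\mu_{n+1}^0$. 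You instead run a one-shot Moser scheme along the path $(1-s)\mu$ (incidentally the same retraction the paper uses for contractibility of the moduli space), at a general compatible $\mu$; this forces you to invert the perturbed operators $D_c=\bar\partial-\mu_2\partial+c\,\partial\mu_2$ (Korn--Lichtenstein plus Vekua theory rather than a plain $\bar\partial$-inversion), to track smooth dependence of the solutions on $s$, and to verify the reality constraint $w_{l,k}=\overline{w_{k,l}}$ by hand --- your block-triangular computation with diagonal determinants $1-|\mu_2|^{2k}>0$ is correct and is exactly where compatibility enters; the paper never meets this issue because after trivialization $\bar p^{\,n}\equiv 0 \bmod I$ kills the conjugate term. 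What your approach buys is local surjectivity of the infinitesimal action at an \emph{arbitrary} $\mu$, which is closer to what is actually used in the tangent- and cotangent-space computations for the moduli space; what the paper's induction buys is analytic economy, concentrating all the hard analysis in the classical $n=2$ case and reducing every further step to the unperturbed Cauchy--Riemann equation.
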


\begin{proof}
The proof is by induction. For $n=2$, we already know the result which is Gauss and Korn-Lichtenstein's theorem on the existence of isothermal coordinates. So suppose that the statement is true for $n \geq 2$ and we will show it for $n+1$.

By induction hypothesis, there is a higher diffeomorphism which makes $\mu_2(z)=...=\mu_n(z)=0$ for all $z$ near the origin. We will construct a higher diffeomorphism generated by a Hamiltonian of degree $n$ giving $\mu_{n+1}(z)=0$ for all $z$ near 0. Since a Hamiltonian of degree $n$ does not affect the $\mu_k$ with $k\leq n$ (see previous proposition), we are done.

Let us try a Hamiltonian of the form $$H(z, \bar{z},p,\bar{p})=w_n(z,\bar{z},p,\bar{p})p^n+\bar{w}_n(z,\bar{z},p,\bar{p})\bar{p}^n$$ generating a flow $\phi_t$. 
We denote by $\mu_{n+1}^t(z,\bar{z})$ the image of $\mu_{n+1}(z,\bar{z})$ by $\phi_t$ (note that $\phi_t$ fixes the zero-section pointwise). The variation formula  for $\mu_2=0$ then reads
$$\frac{d}{dt}\mu_{n+1}^{t}(z,\bar{z}) = \bar{\partial}w_{n}(z,\bar{z},0,0)$$
Thus, the variation does not depend on time. We wish to have $\frac{d}{dt}\mu_{n+1}^{t}(z,\bar{z})=-\mu_{n+1}^{t=0}(z,\bar{z}).$ So we have to solve 
$$\bar{\partial}w_{n}(z,\bar{z},0,0)=-\mu_{n+1}^{0}(z,\bar{z})$$
The inversion of the Cauchy-Riemann operator $\bar{\partial}$ is well-known. We denote its inverse by $T$. Explicitly, we have 
$$Tf(z)=\frac{1}{2\pi i}\int_{\mathbb{C}}\frac{f(\zeta)}{\zeta-z}d\zeta \wedge d\bar{\zeta}$$ for any square-integrable function $f$.

Therefore, on the zero-section we set $w_n(z,\bar{z},0,0)=-T\mu_{n+1}^{0}(z,\bar{z})$ (since $\mu_{n+1}$ is smooth, it is locally square-integrable). To define it everywhere, we choose a bump function $\beta$, in our case a function on $\mathbb{C}^2$ which is 1 in a neighborhood of the origin and 0 outside a bigger neighborhood of the origin, and we put $$w_n(z,\bar{z},p,\bar{p})=-\beta(p,\bar{p}) T\mu_{n+1}^{0}(z,\bar{z}).$$ So the Hamiltonian is defined everywhere and gives a compactly supported vector field which therefore can be integrated for all times. We then get $$\mu_{n+1}^t(z,\bar{z})=(1-t)\mu_{n+1}^{0}(z,\bar{z})$$
Therefore, at time $t=1$, $\mu_{n+1}$ vanishes everywhere.
\end{proof}

\subsection{Cotangent space}\label{appendix2}

As for the case $\mu=0$ we have $$T^*_\mu\hat{\mathcal{T}}^n_{\Sigma}=\{t_2,...,t_n \mid t_i\in \Gamma(K^i) \text{ and } \int_{\Sigma}\sum_k t_k \delta \mu_k =0 \}.$$
We compute using the formula for $\delta\mu_k$ given in proposition \ref{varmu}:
\begin{align*}
\int \sum_k t_k \delta \mu_k =& \int (\bar{\partial}\!-\!\mu_2 \partial\!+\!(k\! -\!1)\partial\mu_2)w \; t_{k}+\sum_{l=1}^{n-k}(-(l\!+\!1)\mu_{l+2}\partial+(k\!-\!1)\partial \mu_{l+2})w \; t_{k+l} \\
=& \int w \left((-\bar{\partial}\!+\!\mu_2\partial\!+\!k\partial\mu_2)t_{k}+\sum_{l=1}^{n-k}((l\!+\!k)\partial\mu_{l+2}+(l\!+\!1)\mu_{l+2}\partial)t_{k+l}  \right)  
\end{align*}

\noindent This has to be zero for all $w$, therefore $$T^*_\mu\hat{\mathcal{T}}^n_{\Sigma}=\{(t_2,...,t_n) \mid  (-\bar{\partial}+\mu_2\partial+k\partial\mu_2)t_{k}+\sum_{l=1}^{n-k}((l+k)\partial\mu_{l+2}+(l+1)\mu_{l+2}\partial)t_{k+l}=0 \; \forall k\}$$

\subsection{Spectral Curve}\label{appendix3}

Here we give some details to proposition \ref{spectralcurve} stating that $\{ P,Q\} = 0 \mod I \mod t^2$ iff $I \in T^*\hat{\mathcal{T}}^n$.

We can decompose the Poisson bracket $\{p^n+t_2p^{n-2}+...+t_n,-\bar{p}+\mu_1+\mu_2p+...+\mu_np^{n-1}\}$ into three parts: the first part $\{t_2p^{n-2}+...+t_n,-\bar{p}+\mu_2p+...+\mu_np^{n-1}\}$ has been already computed by interpreting $t_2p^{n-2}+...+t_n$ as a Hamiltonian $H$. This gives almost the condition of the cotangent space to geometric Hitchin space. The only problem is the term $p^{n-1}$.

The second term $\{t_2p^{n-2}+...+t_n,\mu_1\}$ simply vanishes modulo $t^2$ since $\mu_1 = 0 \mod t$. 

The third and last term $\{p^n, -\bar{p}+\mu_1+\mu_2p+...+\mu_np^{n-1}\} = np^{n-1}(\partial\mu_1+\partial\mu_2p+...+\partial\mu_np^{n-1})$ and the special value of $\mu_1$ let precisely vanish the term with $p^{n-1}.$

\end{document}